\newtheorem{thr}{Theorem}[section]
\newtheorem{lem}[thr]{Lemma}
\newtheorem{prop}[thr]{Proposition}
\theoremstyle{definition}
\newtheorem{defi}[thr]{Definition}
\newtheorem*{defi*}{Definition}
\newtheorem{prob}[thr]{Problem}
\def\C{\mathcal{C}}
\def\E{\mathbb{E}}
\def\eps{\varepsilon}
\newcommand*{\abs}[1]{\lvert #1\rvert}
\newcommand*{\bceil}[1]{\left\lceil #1\right\rceil}
\def\a{\alpha}
\def\b{\beta}
\title{Many Hamiltonian subsets in large graphs with given density}
\date{}
\author{
Stijn Cambie \thanks{Extremal Combinatorics and Probability Group (ECOPRO), Institute for Basic Science (IBS), Daejeon, South Korea, supported by the Institute for Basic Science (IBS-R029-C4),
E-mail: {\tt stijn.cambie@hotmail.com or  \{stijncambie, jungao, hongliu\}@ibs.re.kr}.}
\and
Jun Gao\footnotemark[1] 
\and
Hong Liu\footnotemark[1] 
}
\begin{document}
\maketitle

\begin{abstract}
    A set of vertices in a graph is a Hamiltonian subset if it induces a subgraph containing a Hamiltonian cycle. Kim, Liu, Sharifzadeh and Staden proved that among all graphs with minimum degree $d$, $K_{d+1}$ minimises the number of Hamiltonian subsets. We prove a near optimal lower bound that takes also the order and the structure of a graph into account. For many natural graph classes, it provides a much better bound than the extremal one ($\approx 2^{d+1}$). Among others, our bound implies that an $n$-vertex $C_4$-free graphs with minimum degree $d$ contains at least $n2^{d^{2-o(1)}}$ Hamiltonian subsets.
\end{abstract}

\section{Introduction}
Finding sufficient conditions that guarantee the existence of certain cycles is a well-studied topic in combinatorics.
A cycle in a graph is \emph{Hamitonian} if it spans the whole vertex set of the graph. Testing whether a graph contains a Hamiltonian cycle is one of Karp's original NP-complete problems \cite{karp1972reducibility}. Dirac's theorem~\cite{dirac1952some} from $1952$, arguably the most influential result in this area, asserts that $\delta \ge \frac n2$ is a tight minimum degree condition for containing a Hamiltonian cycle.
Since then, various extensions have been studied over the past 70 years, see e.g.~\cite{csaba2016proof,knox2015edge,kuhn2013hamilton,kuhn2014hamilton,Kuhn2005} and the survey \cite{kuhn2014hamiltonsurvey}. 

In this paper, we study the enumeration problem on Hamiltonian subsets of a graph. A set of vertices $A \subseteq V(G)$ is a \emph{Hamiltonian subset} if $G[A]$ contains a Hamiltonian cycle. Denote by $h(G)$ the number of Hamiltonian subsets of $G$. It is natural to ask how $h(G)$ relates to the minimum degree. Intuitively, when the minimum degree is given, larger graphs tend to have more Hamitonian subsets. In 1981, Koml\'os conjectured that among all graphs with minimum degree at least $d$, the complete graph $K_{d+1}$ minimises the number of Hamiltonian subsets. This conjecture was recently confirmed for large $d$ by Kim, Liu, Sharifzadeh and Staden~\cite{KLSS17}, who also showed that $K_{d+1}$ is the \emph{unique} minimiser.

While~\cite{KLSS17} brings a happy ending to Koml\'os's conjecture, it leaves much to be desired. Perhaps the most natural question, considering the $n$-vertex graph $G^{\star}$ consisting of $\frac{n-1}{d}$ copies of $K_{d+1}$ sharing exactly one common vertex, is whether for any $n$-vertex graph $G$ with $\delta(G)=d$, $h(G)=\Omega(n2^d)$. Also, notice that $G^{\star}$ is basically a disjoint union of $K_{d+1}$s. If components of a graph are much larger than the unique minimiser $K_{d+1}$, is it possible to obtain an exponential improvement on $2^d$? As we shall see, the relevant parameter is the `essential order' of a graph, captured by the following notion of \emph{crux} introduced by Haslegrave, Hu, Kim, Liu, Luan and Wang~\cite{HHKLLW22}. Roughly speaking, the crux of a graph is large when the edges are relatively uniformly distributed. We write $d(G)$ for the average degree of $G$.

\begin{defi*}[Crux]
    For a constant $\a \in (0,1)$, a subgraph $H\subseteq G$ is an \emph{$\a$-crux} if $d(H)\ge \a \cdot d(G)$. 
    The $\a$-crux function, $c_\a(G)$, of $G$ is defined as the order of a minimum $\a$-crux in $G$, that is,
    $$ c_\a (G)=\min \{ \abs H \colon H \subseteq G \text{ and } d(H) \ge \a \cdot d(G) \}.$$
\end{defi*}

Here is our main result. 

 \begin{thr}\label{main:1}
   There exist constants $B$ and $d_0$ such that the following is true. Let $G$ be an $n$-vertex graph with average degree $d\ge d_0$, $t=c_{\frac{1}{5}}(G)$ and $\b=(6000\log3)^{-16}$, then
    $$h(G)\ge \frac{1}{B}n2^{\b t/\log^{16}t}.$$
\end{thr}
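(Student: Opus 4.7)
My approach has three main stages, built on the crux definition together with the Komlós–Szemerédi sublinear expander machinery.

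\emph{Stage 1 (Extract an expander of essential order $\Theta(t)$).} By the definition of $t=c_{1/5}(G)$, there is a subgraph $H_0\subseteq G$ with $|V(H_0)|=t$ and $d(H_0)\geq d/5$. I would apply the Komlós–Szemerédi sublinear expander lemma inside $H_0$ to extract a subgraph $H\subseteq H_0$ that is an $(\varepsilon,s)$-expander with $\varepsilon(x)=\Theta(1/\log^{2} x)$, has minimum degree $\Theta(d)$, and retains order $\Theta(t)$. This gives the two properties I will rely on later: a short-path guarantee between any two linearly-sized subsets, and minimum degree of order $d$.

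\emph{Stage 2 (Build $2^{\Omega(t/\log^{16} t)}$ Hamiltonian subsets in $H$).} This is the heart of the argument. I would first construct a long cycle $C_{0}\subseteq H$ of length $\ell=\Omega(t/\mathrm{polylog}(t))$, obtained by the standard Pósa-type rotation–extension argument inside the expander. Along $C_{0}$ I would then locate $k=\Omega(t/\log^{16} t)$ pairwise vertex-disjoint \emph{adjusters}: short segments of $C_{0}$, each of which admits an alternative internally-disjoint rerouting through $H\setminus C_{0}$ by the expander's short-path property. Each adjuster contributes an independent binary choice (use the segment or its alternative), and distinct choice-patterns produce distinct Hamiltonian subsets because the relevant symmetric differences are nonempty by construction. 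To ensure that the $k$ adjusters are genuinely vertex-disjoint, I would iterate the short-path lemma while restricting to the unused portion of $H$; each iteration loses a polylogarithmic factor, and their cumulative effect is exactly the $\log^{16} t$ in the denominator.

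\emph{Stage 3 (Gain the factor $n$).} A routine preprocessing reduces to the case $\delta(G)\geq \gamma d$ for some small constant $\gamma>0$, so every vertex $v\in V(G)$ lies within short graph distance of $H$. For each such $v$, I would take a short path from $v$ into $H$ and absorb both $v$ and the path via a cycle rotation inside $H$, producing a Hamiltonian subset containing $v$. Summing over $v\in V(G)$ and dividing by the maximum size of a Hamiltonian subset (which is $O(t)$) yields the desired factor $n$, with all multiplicative constants absorbed into $B$.

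The principal obstacle is Stage 2: guaranteeing $\Omega(t/\log^{16} t)$ mutually vertex-disjoint adjusters requires careful management of the expansion budget, since each invocation of the short-path lemma in the expander consumes expansion and costs a polylog factor. The exponent $16$ records the price of the nested expansion applications needed to (i) build the long cycle, (ii) split it into disjoint short segments, and (iii) find vertex-disjoint alternative routings for each segment, all while maintaining enough expansion to carry out the subsequent steps.
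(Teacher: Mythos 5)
Your overall blueprint (sublinear expander, then a wheel-like structure giving $2^{\Omega(t/\log^{16}t)}$ binary choices, then a separate argument for the factor $n$) matches the paper's in spirit, but two of your three stages have genuine gaps. In Stage 2, the rerouting mechanism does not work quantitatively. If $C_0$ has length $\Omega(t/\mathrm{polylog}\,t)$ and you cut it into $k=\Omega(t/\log^{16}t)$ short segments, then to reroute one segment internally disjointly from $C_0$ you must apply the short-diameter lemma with $X,X'$ being (neighbourhoods of) two single vertices or $\mathrm{polylog}$-sized segments, while the forbidden set $W$ contains the rest of $C_0$, of size $\Omega(t/\log^2 t)$. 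The lemma requires $|W|\le\tfrac14\eps(x)x$, which forces $x=\Omega(t)$, and there is no way to grow balls of that size around prescribed segment endpoints while avoiding $C_0$ (a sublinear expander gives no expansion guarantee for sets below the threshold, and a single vertex may have all neighbours in $W$). The paper avoids exactly this trap by reversing the order of operations: it first finds $t/p^{10}$ \emph{disjoint} cycles each of length $\Theta(\log^5 n)$--$\Theta(\log^6 n)$, and then connects them by a DFS-type process in which the two sets fed to the short-diameter lemma are large \emph{unions} of unexplored/explored cycles (total size $\Omega(n/p^5)$), so the size hypothesis is met; only afterwards is the chain closed into a wheel. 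Relatedly, in Stage 1 your claim that the expander extracted inside the crux $H_0$ "retains order $\Theta(t)$" is unjustified: the Koml\'os--Szemer\'edi lemma preserves average degree, not order, and the output could a priori be a clique of order $\Theta(d)\ll t$. The correct deduction (used in the paper) is that the output expander has average degree $\ge \alpha d(G)$ and is therefore \emph{itself} an $\alpha$-crux, so its order is at least $c_\alpha(G)$ by definition.

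Stage 3 is the most serious gap. A single expander $H$ has order $\Theta(t)$, which may be a vanishing fraction of $n$; most vertices of $G$ need not be anywhere near $H$, and no minimum-degree preprocessing fixes this (deleting low-degree vertices can discard almost all of $G$). Moreover, even for vertices that can be joined to $H$, producing one Hamiltonian subset per vertex and dividing by the maximum subset size yields only $\Omega(n/t)$ subsets, and there is no mechanism controlling how often a given subset is recounted across different vertices. The paper's factor of $n$ comes from an entirely different and essential argument: take a minimal counterexample and run three strategies --- (i) repeatedly extract expanders from $G-S$ and harvest one heavy vertex at a time (distinctness is automatic because later subsets avoid earlier chosen vertices); (ii) if that stalls, repeat inside the bipartite graph $G[S,V\setminus S]$, where double counting is excluded by a parity argument on $|H\cap S_i|$; (iii) if both stall, $G[S]$ contains half the edges on at most $2n/B$ vertices, and the minimality of the counterexample together with the crux monotonicity (Proposition~\ref{prop:ca(G)~an}) closes the induction. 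None of this is present in your proposal, and without it the linear factor in $n$ is not obtained.
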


Our bound is optimal up to the constant factor $B$ and the polylog factor in the exponent (consider again $G^{\star}$). Also $\frac 15$ in the crux function can be replaced by any constant strictly smaller than $\frac 12.$
It improves on the extremal bound $h(K_{d+1})\approx 2^{d+1}$ in two aspects as suggested above, i.e. having a factor linear in the order $n$ and an exponential (in $d$) improvement for graphs whose crux size is much larger than their average degree. For example, for a $C_4$-free graph $G$, we get $h(G)\ge n2^{d^{2-o(1)}}$. Here are some common graph classes for which $c_{\alpha}(G)\gg d(G)$: $(i)$ $K_{s,t}$-free graphs $G$ with $s,t\ge 2$, satisfy $c_\alpha(G)=\Omega\big(d(G)^{s/(s-1)}\big)$; $(ii)$ a $\frac{d}{r}$-blow-up $G$ of a $d$-vertex $r$-regular expander graph for a sufficiently large constant $r$ satisfies $c_\alpha(G)=\Omega(d^2)$ and $d(G)=d$; and $(iii)$ graph products have even exponential gap, e.g. using isoperimetry inequalities, one can show that the $d$-dimension hypercube $Q^d$ satisfies $c_\alpha(Q^d)\ge 2^{\alpha d}$.

It is worth mentioning that Theorem~\ref{main:1} is another manifestation of the \emph{replacing average degree by crux} paradigm proposed in Haslegrave, Hu, Kim, Liu, Luan and Wang~\cite{HHKLLW22}. It suggests that one might be able to replace the appearance of $d(G)$ in results on sparse graph embeddings with the crux size $c_\alpha(G)$ instead. We refer the readers to~\cite{HHKLLW22, Im22} for more results illustrating this paradigm.

Our proof takes a different approach than that in~\cite{KLSS17}. To have the factor linear in the order $n$, we find a positive fraction of vertices, each contained in many distinct Hamiltonian subsets. To this end, we repeatedly apply the following result, which guarantees one such `heavy' vertex, in different dense subgraphs of the host graph.

\begin{thr}\label{cor:cycles with v}
    Let $0<\alpha<\frac{1}{2}$, $G$ be a graph with sufficiently large average degree $d$ and $t=c_{\alpha}(G)$. Then there exists a vertex lying in at least $2^{\b t/\log^{16} t}$ distinct Hamiltonian subsets, where $\b=(6000\log3)^{-16}$.
\end{thr}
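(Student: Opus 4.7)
My plan is to prove Theorem~\ref{cor:cycles with v} using the Koml\'os--Szemer\'edi sublinear expander framework, in the spirit of the ``replace average degree by crux'' paradigm of Haslegrave--Hu--Kim--Liu--Luan--Wang. To begin, I would apply the Koml\'os--Szemer\'edi regularisation to extract a subgraph $H\subseteq G$ that is an $\eps$-sublinear expander with $d(H)=\Omega(d)$. Combined with $c_\alpha(G)=t$ for $\alpha<1/2$, any subgraph of $H$ preserving a constant fraction of the average degree must have at least $t$ vertices, so one can arrange $|V(H)|\ge t/\log^{O(1)}t$, with the usual polylogarithmic expansion and diameter guarantees inherited.

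The main construction step is to pick a vertex $v$ of large degree in $H$ and, using the polylogarithmic-diameter property of $H$, build a long cycle $C_0$ through $v$ of length $\Omega\!\bigl(|V(H)|/\log^{O(1)}|V(H)|\bigr)$. Then, as long as the already-used vertices occupy only a small fraction of $V(H)$, I iteratively pick an edge $xy$ of $C_0$ that has not been used yet and, by the expansion of $H$ restricted to the remaining vertices, construct an internally-disjoint $x$--$y$ path $P_i$ of polylogarithmic length whose interior avoids $V(C_0)$ and all previously built ears. Each ear costs only polylogarithmically many vertices, so this process produces $k=\Omega(t/\log^{16}t)$ pairwise internally disjoint ears $P_1,\dots,P_k$, each spanning a distinct edge of $C_0$.

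For every subset $S\subseteq[k]$, the set
\[
A_S\;:=\;V(C_0)\,\cup\,\bigcup_{i\in S}\mathrm{int}(P_i)
\]
is then a Hamiltonian subset: modifying $C_0$ by replacing, for each $i\in S$, the edge $e_i$ by the ear $P_i$ yields a Hamiltonian cycle in $G[A_S]$. Distinct choices of $S$ give distinct $A_S$ since the interiors of the $P_i$ are nonempty and pairwise disjoint, so we obtain at least $2^k\ge 2^{\b t/\log^{16}t}$ Hamiltonian subsets containing $v$. The main obstacle will be the ear-construction stage: each of the nested uses of sublinear expansion---passing to $H$, forming $C_0$, locating endpoints of a new ear on $C_0$, and joining them by a short path avoiding the forbidden set---costs a factor of $\log^{O(1)}t$ in the expansion budget, and balancing these accumulated losses while keeping the residual graph a sublinear expander throughout $\Omega(t/\log^{16}t)$ iterations is precisely what pins down the numerical exponent $16$ in the denominator.
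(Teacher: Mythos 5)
Your target structure is sound: a long cycle $C_0$ with $k$ internally disjoint ears attached to distinct edges is exactly an $\ell$-wheel in the sense of Definition~\ref{defn:wheel} (each edge $e_i$ together with its ear $P_i$ forms one of the attached cycles), and your count of $2^k$ distinct Hamiltonian subsets through a common vertex matches the paper's use of the wheel. The gap is in the ear-construction step, and it is a genuine one. In a \emph{sublinear} expander you cannot connect two \emph{prescribed} vertices by a short path: Lemma~\ref{lem:avoidWshortpath} only connects sets $X,X'$ of size $x\ge 8$ subject to $|W|\le\frac14\eps(x)x$, and a single vertex does not expand. The endpoints $x,y$ of an edge of $C_0$ have degree only $\delta(H)=\Omega(d)$ in the expander subgraph, and $d$ may be a constant independent of $n$ and of $t$ (e.g.\ $G$ a $d_0$-regular expander, where $t=\Theta(n)$); their few neighbours may all lie in the forbidden set $V(C_0)\cup\bigcup_j \mathrm{int}(P_j)$, so one cannot even start growing balls around them. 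A second, quantitative obstruction: your forbidden set contains $V(C_0)$, of size $\Omega(n/\log^{O(1)}n)$, which is at (or beyond) the threshold $\frac14\eps(x)x=O(x/\log^2 x)$ that Lemma~\ref{lem:avoidWshortpath} tolerates unless the connecting sets have size nearly $n$ --- and you would need two such disjoint sets per ear, for $\Omega(t/\log^{16}t)$ ears.

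The paper is engineered precisely around this limitation: it never connects individual vertices. Step~1 (Proposition~\ref{prop:manydisjcycles}) produces $t/p^{10}$ disjoint cycles of \emph{polylogarithmic} length between $p^5$ and $p^6$; each such cycle is a large enough vertex set to serve as an expandable unit. Step~2 (Proposition~\ref{find chain}) runs a DFS-type exploration on this family of cycles, linking cycle to cycle (large set to large set) by short paths into a long chain, with the DFS bookkeeping guaranteeing that the cumulative forbidden set stays within the budget of Lemma~\ref{lem:avoidWshortpath}. Step~3 (Proposition~\ref{prop:wheel}) closes the chain into a wheel by one more large-set-to-large-set connection. The $2^\ell$ Hamiltonian subsets then come from choosing one of the two arcs of each attached (long) cycle, rather than from including or excluding a short ear. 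To repair your argument you would have to replace ``edge of $C_0$ plus parallel ear'' by ``polylogarithmically long segment of structure plus parallel path joining two large sets,'' which is in effect what the paper's chain-and-wheel construction does.
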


We find `heavy' vertices via embedding a large wheel-like structure (see Definition~\ref{defn:wheel}), inspired by the adjuster structure in Liu-Montgomery~\cite{LM22}, in sublinear expander subgraphs of $G$. To construct such large wheel, we perform an exploration algorithm similar to Depth First Search on a collection of suitable cycles. The theory of sublinear expanders, first introduced by Koml{\'o}s and Szemer{\'e}di~\cite{KS94,KS96} in the 1990s, has played a pivotal role in many recent resolutions of old conjectures; see e.g.~\cite{FKKl22,FL22,Fer22,HHKLLW22,HHKL22,HKL22,KLSS17,LM17,LM22,LWY22}.

It would be interesting to know whether the polylog factor in the exponent in Theorem~\ref{main:1} is necessary. 
Regular expanders can have girth polylogarithmic in $n$, so our approach cannot be applied to delete the polylog factor. On the other hand, we observe that the polylog factor is not necessary for $(n,d,\lambda)$-graphs. The $(n,d,\lambda)$-graphs are $d$-regular graphs on $n$ vertices with second largest eigenvalue in absolute value $\lambda$. It is not hard to see that the crux size of an $(n,d,\lambda)$-graph is linear in $n$.

\begin{prop}\label{coro:3}
    Let $0<\beta <1$ and $G$ be an $(n,d,\lambda)$-graph. 
    If  $\frac{d}{|\lambda|} \ge \frac{1}{\b ^2}$,
    then $ h(G) \ge  \binom{n}{\frac{1}{2}n} / \binom{(\frac{1}{2}+ 3 \b)n}{3\b n}$. 
    Specially, if $\b < 1/6$, then $h(G) \ge  2^{(\frac{1}{2}-3\b)n}$.
\end{prop}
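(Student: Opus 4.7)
The plan is to reduce the problem to the following Hamiltonicity claim: for every subset $A \subseteq V(G)$ with $|A| = (1/2+3\beta)n$, the induced subgraph $G[A]$ contains a Hamilton cycle. Given this, the bound on $h(G)$ follows from a standard double-counting argument. Consider pairs $(B, A)$ where $B \subseteq A \subseteq V(G)$, $|B|=n/2$, $|A|=(1/2+3\beta)n$, and $G[A]$ is Hamiltonian. Since every $n/2$-subset $B$ can be extended to some (in fact, any) superset $A$ of size $(1/2+3\beta)n$, which is Hamiltonian by the claim, there are at least $\binom{n}{n/2}$ such pairs. On the other hand, each Hamiltonian $A$ of size $(1/2+3\beta)n$ contributes exactly $\binom{(1/2+3\beta)n}{n/2} = \binom{(1/2+3\beta)n}{3\beta n}$ such pairs, whence the number of Hamiltonian subsets of size $(1/2+3\beta)n$ is at least $\binom{n}{n/2}/\binom{(1/2+3\beta)n}{3\beta n}$. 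The ``Specially'' inequality then follows by combining $\binom{n}{n/2} \ge 2^n/(n+1)$ with $\binom{(1/2+3\beta)n}{3\beta n} \le 2^{(1/2+3\beta)n}$ and absorbing the polynomial factor for large $n$.

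For the Hamiltonicity of $G[A]$, my plan is to verify the Chv\'atal--Erd\H{o}s criterion $\alpha(G[A]) \le \kappa(G[A])$. Control $\alpha$ via the expander mixing lemma: for any independent set $I \subseteq A$, $e_G(I,I)=0$, hence $d|I|^2/n \le |\lambda||I|$ gives $\alpha(G[A]) \le n|\lambda|/d \le n\beta^2$. For $\kappa$, take a minimum cut $S$ of $G[A]$ and a partition $A\setminus S = U \cup W$ with $|U|\le|W|$ and no $G$-edge between $U$ and $W$ (since both lie in $A$). Expander mixing on $(U,W)$ yields $|U||W| \le (n|\lambda|/d)^2 \le n^2\beta^4$, so $|U| \le n\beta^2$. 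A careful edge accounting from $U$ --- with $e_G(U, V\setminus U) \ge d|U|(1-2\beta^2)$ (from regularity and expander mixing on $(U,U)$), $e_G(U, V\setminus A) \le d|U|(1/2-3\beta) + |\lambda|\sqrt{|U|(1/2-3\beta)n}$ (from expander mixing on $(U, V\setminus A)$), and $e_G(U, S) \le d|S|$ --- gives $|S| \ge |U|(1/2+3\beta-2\beta^2) - \beta^2\sqrt{|U|(1/2-3\beta)n}$, which I expect to be at least $n\beta^2$ for the critical choice $|A| = (1/2+3\beta)n$, completing Chv\'atal--Erd\H{o}s.

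The main obstacle is the delicate boundary case of a single-vertex separation ($|U|=1$), where the derived bound on $|S|$ can degenerate. Here one falls back on the direct bound $|S| \ge \deg_A(v)$ (all of $v$'s $G[A]$-neighbors must lie in $S$), combined with expander mixing applied to $(\{v\}, V\setminus A)$, which yields $\deg_A(v) \ge d(1/2+3\beta) - |\lambda|\sqrt{(1/4-9\beta^2)n}$. Showing that this is still at least $n\beta^2$ is the main technical challenge: the precise selection $|A| = (1/2+3\beta)n$ is calibrated so that the $3\beta n$-sized slack in $|A|$ exactly absorbs the $|\lambda|\sqrt{n}$-type error term arising from single-vertex expander mixing, provided the spectral-gap hypothesis $d/|\lambda|\ge 1/\beta^2$ is in force.
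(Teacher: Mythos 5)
Your reduction rests on the claim that \emph{every} set $A$ of size $(\tfrac12+3\beta)n$ induces a Hamiltonian subgraph, and this claim is false under the stated hypotheses. The condition $d/|\lambda|\ge 1/\beta^2$ places no lower bound on $d$ relative to $n$ (Ramanujan-type graphs with $d$ a large constant and $|\lambda|\approx 2\sqrt d$ satisfy it once $d\ge 4\beta^{-4}$). Whenever $d\le(\tfrac12-3\beta)n$ one can pick a vertex $v$ and place all of its neighbours in $V\setminus A$, so that $v$ is isolated in $G[A]$ and $G[A]$ has no Hamilton cycle. The same obstruction shows why the Chv\'atal--Erd\H{o}s verification cannot close: $\kappa(G[A])\le\delta(G[A])\le d$ while $\alpha(G[A])\ge |A|/(d+1)$, so $\kappa\ge\alpha$ already forces $d=\Omega(\sqrt n)$, a hypothesis you do not have; and even in the dense regime the $|U|=1$ case you flag is genuinely fatal, since $\deg_A(v)$ can be made $0$ by an adversarial choice of $A$. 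No calibration of the constant $3\beta$ against the error terms repairs this --- the problem is the quantifier (``every $A$''), not the spectral estimates.

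The paper avoids exactly this trap by putting the Hamiltonian subsets \emph{inside} the $n/2$-sets rather than around them. It first notes that the spectral hypothesis makes $G$ a $\beta$-graph (every two disjoint sets of size $>\beta n$ span an edge), which gives $|N_{G[S]}(W)|\ge(c-3\beta)n$ for every $W\subseteq S$ with $\beta n\le |W|\le 2\beta n$; Krivelevich's DFS-based theorem then produces, in \emph{every} $S$ of size $n/2$, a cycle of length at least $(\tfrac12-3\beta)n$. The vertex set of such a cycle is a Hamiltonian subset lying in at most $\binom{(1/2+3\beta)n}{3\beta n}$ of the $n/2$-sets, and the same double count gives the stated bound. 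The point is that long cycles only require neighbourhood expansion of medium-sized sets, never a minimum-degree condition inside $S$. (Separately, your derivation of the ``Specially'' part via $\binom{n}{n/2}\ge 2^n/(n+1)$ loses a polynomial factor that the target $2^{(1/2-3\beta)n}$ leaves no room to absorb; the termwise ratio computation $\prod_{i}\frac{n-i}{n/2-i}\ge 2^{(1/2-3\beta)n}$ used in the paper is what is needed.)
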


\medskip

\noindent\textbf{Organisation.} The rest of the paper is organised as follows. In Section~\ref{sec:not}, we list some preliminaries needed for the proof. In Section~\ref{sec:manydiscycles}, we prove Theorem~\ref{cor:cycles with v}. In Section~\ref{sec:mainthr}, we prove the main result, Theorem~\ref{main:1}. Proposition~\ref{coro:3} is proved in Section~\ref{sec:proofbgraph} and concluding remarks in Section~\ref{sec:conc}.

\section{Notations and preliminary properties}
\label{sec:not}

A \emph{ball of radius $r$ (around a vertex $v$)}, denoted by $B_G^{r}(v)=\{u \in V \colon 0 \le d(u,v) \le r\}$, in a graph $G$ is the set of all vertices which are at distance no more than $r$ from $v$. Here we write $B^r(v)$ if the underlying graph we consider is clear.
For a vertex set $X,$ the ball around $X$ of radius $r$ is similary defined as the set of all vertices at distances at most $r$ from (some vertex in) $X$, that is $B^{r}(X)=\bigcup_{v \in X}B^{r}(v)$. We write $G-X=G[V(G)\setminus X]$ for the subgraph induced on $V(G)\setminus X$. Throughout the paper, $\log$ denotes the natural logarithm.

\subsection{Sublinear expanders}
For $\eps_1 >0$ and $k>0$. Let $\eps(x,\eps_1,k)$ be the function

\begin{align}\label{formula:eps}
\eps(x,\eps_1,k) = \left\{
    \begin{array}{lr}
         0 &\text{if}\  x<k/5,  \\
         \eps_1/\log^2(15x/k) &\text{if}\  x\ge k/5, 
    \end{array}
    \right.
\end{align}
where, when it is clear from context, we will write $\eps(x,\eps_1,k)$ as $\eps(x).$

\begin{defi}[Sublinear expander]
A graph $G$ is an \emph{$(\eps_1, k)$-expander} if for any subset $X \subseteq V(G)$ of size $k/2 \le |X| \le |V(G)|/2$, we have $|N_G(X)| \ge \eps (|X|)\cdot |X|$.
\end{defi}

A classical result of Koml{\'o}s and Szemer{\'e}di states that any graph contains a sublinear subgraph retaining almost the same average degree.
\begin{lem}[Lemma~2.2, \cite{HHKLLW22}]\label{lem:existence_expander}
    Let $C>30,\eps_1\le1/(10C),k > 0$ and $d > 0$.
    Then every graph $G$ with $d(G) = d$ has a subgraph $H$ such that $H$ is an $(\eps_1,k)$-expander, 
    $d(H) \ge (1-\delta)d$ and $\delta (H) \ge d(H)/2$, where $\delta := \frac{C \eps_1}{\log3}$.
\end{lem}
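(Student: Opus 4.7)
The plan is to prove Lemma~\ref{lem:existence_expander} by iterative refinement, mirroring the classical Komlós--Szemerédi argument. First I would pass to a subgraph with $\delta \geq d(G)/2$ via the standard cleanup (repeatedly delete any vertex whose degree falls below half the current average, which only increases the average degree). Now build a chain $H_0 \supseteq H_1 \supseteq \cdots$: if the current $H_i$ already satisfies the $(\eps_1, k)$-expansion condition, stop; otherwise, fix a witness set $X$ with $k/2 \leq |X| \leq |V(H_i)|/2$ and $|N_{H_i}(X)| < \eps(|X|) \cdot |X|$, and cut $H_i$ in two along it.

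The key dichotomy is as follows. Writing $A := H_i[X \cup N_{H_i}(X)]$ and $B := H_i - X$, every edge of $H_i$ lies in $A$ or in $B$ (the only doubly counted edges are those of $H_i[N_{H_i}(X)]$, so $e(A) + e(B) \geq e(H_i)$), while $|V(A)| + |V(B)| \leq |V(H_i)| + \eps(|X|) \cdot |X|$. Dividing gives
\[
\max\bigl(d(A),\, d(B)\bigr) \;\geq\; d(H_i) \cdot \Bigl(1 - \eps(|X|)\cdot|X|/|V(H_i)|\Bigr),
\]
and I take $H_{i+1}$ to be whichever side realises this maximum. Since $|V(H_i)|$ strictly decreases, the process terminates --- either in an $(\eps_1, k)$-expander, or in a subgraph on fewer than $k$ vertices where the expansion condition is vacuous.

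The heart of the argument is to show that $\prod_i (1 - \eps(|X_i|)\cdot|X_i|/|V(H_i)|) \geq 1 - \delta$. I would group iterations by the scale $|V(H_i)| \in [2^{j-1}k,\, 2^j k]$ and note that an ``$A$-step'' drops $|V|$ by a factor at most $(1+\eps)/2$ (advancing at least one scale), while a ``$B$-step'' with $|X|$ comparable to $|V(H_i)|$ likewise shrinks $|V|$ substantially. Exploiting the polylogarithmic decay $\eps(x) = \eps_1/\log^2(15x/k)$, the aggregate loss at scale $j$ is $O(\eps_1/\log^2 2^j)$, and summing over $j \geq 1$ yields a convergent bound of order $\eps_1/\log 3$, matching the stated $\delta = C\eps_1/\log 3$ once $C > 30$. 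Finally, reapplying the min-degree cleanup restores $\delta(H) \geq d(H)/2$ without damaging expansion, since deleting low-degree vertices shrinks each neighbourhood by at most one and can only help the expansion ratio.

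The main obstacle is precisely this scale-by-scale accounting of the multiplicative loss: the dichotomy itself is a one-line edge count, but calibrating the polylog threshold in $\eps$ so that the product telescopes to $\geq 1-\delta$, and verifying that the interleaving of $A$-steps and $B$-steps does not inflate the sum, is the delicate technical step.
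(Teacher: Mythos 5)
The paper does not prove this lemma; it is quoted from~\cite{HHKLLW22} (ultimately Koml\'os--Szemer\'edi), so I am comparing your argument against the standard proof. Your one-step dichotomy is fine: with $A=H_i[X\cup N_{H_i}(X)]$ and $B=H_i-X$ the mediant inequality does give $\max(d(A),d(B))\ge d(H_i)\bigl(1-\eps(|X|)|X|/|V(H_i)|\bigr)$. The genuine gap is in the accounting that follows. Your scale-by-scale bookkeeping only covers $A$-steps and $B$-steps with $|X|$ comparable to $|V(H_i)|$; it silently ignores the dangerous case of repeated $B$-steps with witness sets of the minimal size $|X|\approx k/2$. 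There, each step loses a factor $1-\eps(k/2)\cdot\tfrac{k}{2m}$ while removing only $k/2$ of the $m$ current vertices, so halving $|V|$ costs a factor about $\exp(-\eps(k/2))$ with $\eps(k/2)=\eps_1/\log^2(7.5)$ --- a constant \emph{independent of the scale} $j$. Summed over the $\log_2(n/k)$ scales this gives a total loss of order $\eps_1\log(n/k)$, not $O(\eps_1/\log 3)$; equivalently, your claim that ``the aggregate loss at scale $j$ is $O(\eps_1/\log^2 2^j)$'' is false for such steps, since $\eps(|X_i|)$ depends on $|X_i|$, not on $|V(H_i)|$. So the greedy chain $H_0\supseteq H_1\supseteq\cdots$ cannot be shown to terminate with average degree $(1-\delta)d$ by this argument.

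The standard proof avoids iteration altogether by a single extremal choice: take $H$ maximising the potential $e(H)/\bigl(v(H)\,\lambda(v(H))\bigr)$ over all subgraphs, where $\lambda$ is a bounded increasing weight with $\log\lambda(2m)-\log\lambda(m)\gtrsim\eps(m)$, so that $\log\lambda(\infty)-\log\lambda(k)=O\bigl(\sum_j\eps(2^jk)\bigr)=O(\eps_1/\log 3)\le\delta$. If such an $H$ had a witness set $X$, comparing \emph{both} pieces $A$ and $B$ to the same global maximiser (rather than to each other) yields $v(H)\lambda(v(H))\le v(A)\lambda(v(A))+v(B)\lambda(v(B))$, which contradicts the decrease of $\lambda$ from the scale of $v(H)$ down to the scale of $v(A)\le\tfrac12(1+\eps_1)v(H)$; the loss $\eps(|X|)|X|$ is charged once, against that drop, and never compounds. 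This extremal choice also delivers $\delta(H)\ge d(H)/2$ for free (removing a vertex of degree below $d(H)/2$ would increase the potential), whereas your post-hoc cleanup claim --- that deleting low-degree vertices ``shrinks each neighbourhood by at most one'' --- is not correct, since the deleted set may be large and could in principle destroy expansion.
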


The following lemma is a slight modified version of Theorem~3.12 in~\cite{LM22}. It finds linear-size balls robustly in expanders. 

\begin{lem}\label{lem:Balln/10} For any $0<\eps_1<1$ the following holds for each $n\geq 60$. Suppose that $G$ is an $n$-vertex $(\eps_1,15)$-expander.
For any set $W\subseteq V(G)$ with $|W|\leq \eps_1 \frac{n}{20\log^{2}n}$, there is a ball $B\subseteq G-W$ with size at least $n/10$ and radius at most $\frac{20}{\eps_1}\log^3n$.
\end{lem}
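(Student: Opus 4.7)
The plan is to identify a large connected component of $G-W$ and then grow a BFS ball inside it from any starting vertex.

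First I would establish a ``giant'' component $C$ of $G-W$ with $|C|>n/2$. Suppose for contradiction that every component of $G-W$ had size less than $n/10$. Greedily aggregate components into a subset $A \subseteq V(G)\setminus W$ with $n/10 \le |A| \le n/5$; since $A$ is a union of components of $G-W$, every external $G$-neighbour of $A$ lies in $W$, i.e.\ $N_G(A) \subseteq W$. Applying the $(\eps_1, 15)$-expander property to $A$ (valid since $8 \le |A| \le n/2$ for $n$ large enough) gives
\[
 |W| \ge |N_G(A)| \ge \eps(|A|)\,|A| \ge \frac{\eps_1\cdot(n/10)}{\log^2(n/5)},
\]
contradicting $|W|\le \eps_1 n/(20\log^2 n)$. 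The same calculation rules out any component of size in $[n/10, n/2]$, so some component $C$ must have $|C|>n/2$.

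Next, for any $v \in C$, the BFS ball $B_i := B_{G-W}^{i}(v)$ is contained in $C$. Since $C$ is a component of $G-W$, the $G$-neighbours of $B_i$ outside $C$ all lie in $W$, so the expander property (applicable whenever $8 \le |B_i| \le n/2$) yields
\[
 |B_{i+1}| \ge |B_i| + \eps(|B_i|)|B_i| - |W\cap N_G(B_i)|.
\]
In the large-ball regime $|B_i| \ge n/20$, a direct computation gives $\eps(|B_i|)|B_i| \ge 2|W|$, so the ball enjoys multiplicative growth $|B_{i+1}| \ge |B_i|(1 + \eps(|B_i|)/2)$. Iterating for $O(\log^2 n/\eps_1)$ steps carries the ball from size $n/20$ to $n/10$, well within the radius budget $r = (20/\eps_1)\log^3 n$.

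The principal obstacle is the small-ball regime $|B_i| < n/20$, where $|W|$ can exceed $\eps(|B_i|)|B_i|$ so a single BFS step is not guaranteed to add vertices. The plan is to refine the blockage from $|W|$ to $|W \cap N_G(B_i)|$, track the monotonically non-decreasing sequence $\eta_i := |W \cap N_G(B_i)|$ against the expansion, and amortise over phases of length $O(\log^2 n/\eps_1)$ to establish a doubling of $|B_i|$ per phase. Two further observations make this tractable: BFS in $G-W$ starting from $v\in C$ can only stall once it exhausts the entire giant component $C$ (in which case $|B_i|\ge |C|>n/2$ and we are immediately done), and the cumulative ``new blockage'' $\sum_i(\eta_{i+1}-\eta_i)$ is at most $|W|$, so phases of heavy blockage cannot persist indefinitely without growth resuming. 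Over $r = (20/\eps_1)\log^3 n$ steps---enough for $\Theta(\log n)$ doubling phases---the ball $B_r$ reaches size at least $n/10$. A small warm-up handles the initial stage $|B_0|=1 < 8$ by growing the ball inside $C$ to size at least $8$ within a few BFS steps.
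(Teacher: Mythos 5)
The paper does not actually prove this lemma---it is imported as a black box (``a slight modified version of Theorem~3.12 in [LM22]'')---so there is no in-paper argument to compare against; I am assessing your proposal on its own. Your first step is correct and cleanly executed: if every component of $G-W$ had size below $n/10$ (or some component had size in $[n/10,n/2]$), aggregating components gives a set $A$ with $8\le |A|\le n/2$ and $N_G(A)\subseteq W$, and since $x\mapsto \eps(x)x=\eps_1x/\log^2x$ is increasing this forces $|W|\ge \eps_1(n/10)/\log^2(n/5)>\eps_1 n/(20\log^2 n)$, a contradiction. So a giant component exists.

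The ball-growing step, however, has a genuine gap, and it is exactly where the whole difficulty of the lemma lies. With $k=15$ the expansion of a ball of size $b$ is $\eps(b)b=\eps_1 b/\log^2 b$, while $|W|$ may be as large as $\eps_1 n/(20\log^2 n)$. Solving $\eps(b)b\ge |W|$ gives $b\gtrsim n/20$, and solving $\eps(b)b\ge 2|W|$ gives $b\gtrsim n/10$ --- i.e.\ your ``direct computation'' that $|B_i|\ge n/20$ implies $\eps(|B_i|)|B_i|\ge 2|W|$ is false for large $n$ (the ratio $\eps(n/20)\cdot(n/20)\,/\,|W|$ tends to $1$, not $2$), and the multiplicative-growth regime does not begin until essentially the target size itself. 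Consequently, for all $b\lesssim n/20$ your recursion $|B_{i+1}|\ge |B_i|+\eps(|B_i|)|B_i|-\eta_i$ yields nothing beyond the trivial $|B_{i+1}|\ge |B_i|+1$, and $+1$ per step over the permitted radius $\tfrac{20}{\eps_1}\log^3 n$ produces a ball of size only $O(\eps_1^{-1}\log^3 n)\ll n/10$. The proposed amortisation does not repair this: the quantity lost at step $i$ is $\eta_i=|W\cap N_G(B_i)|$ itself, not the increment $\eta_{i+1}-\eta_i$, and nothing prevents $\eta_i=|W|$ from the very first step onward (e.g.\ if every vertex of $W$ is adjacent to $v$), so $\sum_i\eta_i$ can be as large as $r|W|$ even though $\sum_i(\eta_{i+1}-\eta_i)\le|W|$. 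Likewise, ``BFS cannot stall before exhausting $C$'' only re-delivers the $+1$ bound. So the entire small-ball regime --- which is the actual content of the lemma, and the reason the result is nontrivial and is cited from Liu--Montgomery rather than proved in two lines --- remains open in your argument; growing a ball from an arbitrary vertex of the giant component and charging the blockage against $|W|$ cannot, by itself, reach size $n/10$ within the stated radius.
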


\begin{lem}[Proposition~3.10, \cite{LM22}]\label{lem:subgraphBall}
For every $m'\le m$ the following is true.
    Every ball $B^r(v)$ of radius $r$ with size $m$ contains a connected subgraph of radius at most $r$ with center $v$ and size $m'.$ 
\end{lem}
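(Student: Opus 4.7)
The plan is to produce the required subgraph as the vertex set of a subtree of a BFS tree rooted at $v$. First I would check that $G[B^r(v)]$ is connected: for any $u \in B^r(v)$, a shortest $uv$-path in $G$ has length at most $r$, and each of its internal vertices is strictly closer to $v$ than $u$ is, hence also lies in $B^r(v)$. This shows $d_{G[B^r(v)]}(u,v) = d_G(u,v) \le r$ for every such $u$, so a BFS tree $T$ of $G[B^r(v)]$ rooted at $v$ spans all $m$ vertices and has depth at most $r$.

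Next, I would iteratively delete a leaf $\ell \neq v$ of the current tree, repeating this exactly $m-m'$ times. Since any tree on at least two vertices has at least two leaves, a leaf distinct from $v$ is always available (and if $m' = m$ no deletions are required). Each deletion preserves the property of being a rooted tree at $v$ and does not increase the depth of any remaining vertex, so after all deletions the resulting subtree $T'$ still has $v$ as its root and depth at most $r$, while $|V(T')| = m'$.

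Setting $S := V(T')$, the induced subgraph $G[S]$ contains $T'$ as a spanning subgraph, and therefore $d_{G[S]}(u,v) \le d_{T'}(u,v) \le r$ for every $u \in S$. Thus $G[S] \subseteq G[B^r(v)]$ is a connected subgraph on $m'$ vertices with center $v$ and radius at most $r$, as required. I do not anticipate a real obstacle here; the only subtle point is to run BFS inside $G[B^r(v)]$ rather than inside $G$, which is what keeps BFS-depths equal to genuine graph distances and thereby guarantees depth at most $r$.
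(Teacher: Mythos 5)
Your proof is correct: the BFS-tree-plus-leaf-deletion argument is exactly the standard proof of this fact (the paper itself only cites it from Liu--Montgomery without reproducing a proof, and the cited proof proceeds the same way). Your observation that the BFS must be run inside $G[B^r(v)]$ rather than in $G$ is the right subtlety to flag, and your handling of it is sound.
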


A key property of expanders is the following short diameter property.

\begin{lem}[\cite{KS96}]
\label{lem:avoidWshortpath}
	Let $\eps_1>0$, $H$ be an $n$-vertex $(\eps_1,15)$-expander and $X,X',W\subseteq V(H)$. If $|X|,|X'|\ge x\ge 8$ and $|W|\le \frac{1}{4}\eps(x)x$, then there is a path in $H-W$ from $X$ to $X'$ of length at most $\frac{2}{\eps_1}\log^3n$.
\end{lem}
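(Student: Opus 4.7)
The plan is to run BFS simultaneously from both endpoints in $H-W$ and use sublinear expansion to show the two fronts must meet quickly. I will set $X_0 := X \setminus W$, $X'_0 := X' \setminus W$, and iteratively define $X_{i+1} := (X_i \cup N_H(X_i)) \setminus W$ and $X'_{i+1}$ analogously. Once $X_i \cap X'_j \neq \emptyset$ for some pair $(i,j)$, tracing BFS parents through a common vertex produces a path in $H - W$ from $X$ to $X'$ of length at most $i+j$; the target is thus to bound how many BFS layers are needed on each side.

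The first point to verify is a monotonicity: for $s \ge 8$, the function $s \mapsto \eps(s)\, s = \eps_1 s/\log^2 s$ (with $k=15$) is non-decreasing, since its derivative $\eps_1(\log s - 2)/\log^3 s$ is positive once $s \ge e^2$. This is precisely why the hypothesis requires $x \ge 8$, and it implies $|W| \le \tfrac{1}{4}\eps(x)\,x \le \tfrac{1}{4}\eps(|X_i|)|X_i|$ throughout the process. Combining with the expander inequality $|N_H(X_i)| \ge \eps(|X_i|)|X_i|$, valid whenever $k/2 \le |X_i| \le n/2$, one obtains the growth estimate
\[
|X_{i+1}| \;\ge\; |X_i| + |N_H(X_i) \setminus W| \;\ge\; \bigl(1 + \tfrac{3}{4}\eps(|X_i|)\bigr)|X_i|,
\]
and the analogous bound for $X'_{i+1}$.

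Writing $a_i := \log|X_i|$ and using $\log(1 + \tfrac{3}{4}\eps(|X_i|)) \ge c \eps_1 / a_i^2$ for an absolute constant $c > 0$, the recursion rearranges to $a_i^2 (a_{i+1} - a_i) \ge c\,\eps_1$. Telescoping this and comparing with the integral $\int a^2\, da = \tfrac{1}{3} a^3$ shows that after some $i^* \le \tfrac{1}{\eps_1}\log^3 n$ steps each of $X_{i^*}$ and $X'_{i^*}$ exceeds size $n/2$; since $|W| < n/2$, they must share a vertex outside $W$, and splicing the two BFS half-paths at such a vertex yields a path in $H - W$ from $X$ to $X'$ of length at most $\tfrac{2}{\eps_1}\log^3 n$. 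The main technical hurdle is squeezing the right absolute constant out of the telescoped integral so that the final factor $2$ is clean; everything else (the $W$-avoidance via monotonicity of $\eps(s)\,s$, and the forced meeting of linear-size fronts) is routine once that monotonicity is in hand.
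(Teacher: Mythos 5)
The paper does not actually prove this lemma---it is imported verbatim from Koml\'os--Szemer\'edi~\cite{KS96}---and your double-BFS argument is precisely the standard proof from that source: grow the neighbourhoods of $X$ and $X'$ inside $H-W$, use the monotonicity of $s\mapsto\eps(s)s$ for $s\ge 8$ to keep $|W|\le\tfrac14\eps(|X_i|)|X_i|$, and telescope $a_i^2(a_{i+1}-a_i)\ge c\eps_1$ against $\int a^2\,da=\tfrac13a^3$ to get at most $\tfrac{1}{\eps_1}\log^3 n$ rounds per side. Your sketch is correct (modulo the routine check that $|X_0|=|X\setminus W|$ still meets the size threshold for expansion, which holds since $|W|\le\tfrac14\eps(x)x\ll x$), so there is nothing to compare it against other than to say it matches the cited argument.
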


\subsection{Large wheels in expanders}
To find vertices in many Hamiltonian subsets, we use the following structures. 

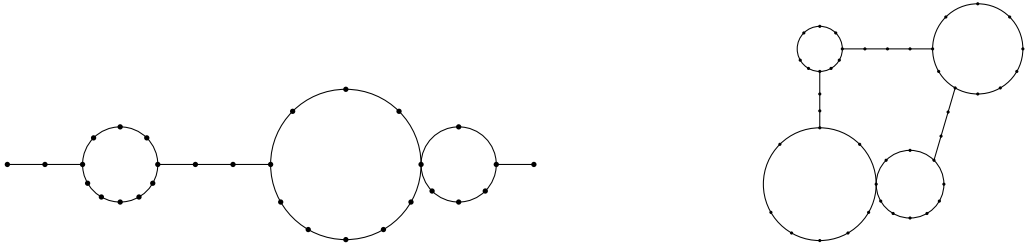
\begin{figure}[htbp]
\centering
\begin{minipage}[b]{0.48\textwidth}
\centering
\begin{tikzpicture}[scale=0.5]

\draw (0,0) circle (1);
\draw (6,0) circle (2);
\draw (9,0) circle (1);
\foreach \i in {-1,-2,-3,1,2,3,4,8,10,11}{
\fill (\i,0) circle (2pt);
}
\draw [-] (1,0)--(4,0);
\draw [-] (-1,0)--(-3,0);
\draw [-] (10,0)--(11,0);

\foreach \x in {-45,0,45,120,150,180,210,240}{
\fill ({sin(\x)},{cos(\x)}) circle (2pt);
}
\foreach \x in {-45,0,45,120,150,180,210,240}{
\fill ({2*sin(\x)+6},{2*cos(\x)}) circle (2pt);
}

\foreach \x in {0,135,180,225}{
\fill ({sin(\x)+9},{cos(\x)}) circle (2pt);
}

\end{tikzpicture}
\label{figure:chain}
\end{minipage}
\begin{minipage}[b]{0.48\textwidth}
\centering
\begin{tikzpicture}[scale=0.3]
\draw (0,0) circle (2.5);

\foreach \x in {-45,0,45,90,120,150,180,210,240}{
\fill ({2.5*sin(\x)},{2.5*cos(\x)}) circle (2pt);
}

\draw (0,6) circle (1);
\foreach \x in {-45,0,45,90,120,150,180,210,240}{
\fill ({sin(\x)},{cos(\x)+6}) circle (2pt);
}

\draw (4,0) circle (1.5);
\foreach \x in {-45,0,45,90,120,150,180,210,240}{
\fill ({1.5*sin(\x)+4},{1.5*cos(\x)}) circle (2pt);
}

\draw (7,6) circle (2);
\foreach \x in {-45,0,45,90,120,150,180,210,240,270}{
\fill ({2*sin(\x)+7},{2*cos(\x)+6}) circle (2pt);
}

\draw [-] (0,2.5)--(0,5);
\fill (0,3.25) circle (2pt);
\fill (0,4) circle (2pt);

\draw [-] (1,6)--(5,6);
\foreach \n in {2,3,4}{
\fill (\n,6) circle (2pt);}

\draw [-] ({1.5*sin(45)+4},{1.5*cos(45)})--({2*sin(210)+7},{2*cos(210)+6});

\fill ({2*(1.5*sin(45)+4)/3+(2*sin(210)+7)/3},{2*(1.5*cos(45))/3+(2*cos(210)+6)/3}) circle (2pt);
\fill ({(1.5*sin(45)+4)/3+2*(2*sin(210)+7)/3},{(1.5*cos(45))/3+2*(2*cos(210)+6)/3}) circle (2pt);

\end{tikzpicture}
\label{figure:wheel}
\end{minipage}
\caption{A $3$-chain and a $4$-wheel}\label{figure:chain&wheel}
\end{figure}

\begin{defi} [Chain/wheel]\label{defn:wheel}
    An \textbf{$\ell$-chain}/\textbf{$\ell$-wheel} is the graph obtained from a path/cycle by replacing $\ell$ edges in the path/cycle with $\ell$ cycles.
\end{defi}
Examples of a chain and a wheel are depicted in Figure~\ref{figure:chain&wheel}. It is easy to see that an $\ell$-wheel has at least $2^\ell$ different Hamiltonian subsets.

\begin{lem}\label{thr:expander-cycle}
Let $0<\eps_1 <1$ and $H$ be an $(\eps_1, 15)$-expander of order $n$, where $n$ is sufficiently large. Then there exists an $\ell$-wheel in $H$ with $\ell \ge \left(\frac{\eps_1}{20} \right)^{16} (2n/\log^{16} n)$.  
\end{lem}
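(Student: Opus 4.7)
The plan is to construct the $\ell$-wheel iteratively, extending the main path by a short segment at each step and attaching a new short cycle at one of its edges, and closing the main path into a cycle at the very end. Two tools from the preliminaries suffice: Lemma~\ref{lem:avoidWshortpath} yields a path of length at most $(2/\eps_1)\log^3 n$ between any two $8$-sets in $H$ while avoiding a thin forbidden set, and Lemma~\ref{lem:Balln/10} guarantees a ball of size $\ge n/10$ in $H$ disjoint from any thin set, providing a reservoir of fresh vertices throughout.

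At stage $i$, I would maintain an $i$-chain consisting of a main path with endpoints $u$ and $v_i$ together with $i$ previously attached short cycles $D_1,\ldots,D_i$; let $W_i$ denote its total vertex set. To extend, first apply Lemma~\ref{lem:Balln/10} with forbidden set $W_i$ to obtain a large ball $B_i\subseteq H-W_i$ of size $\ge n/10$. Then use Lemma~\ref{lem:avoidWshortpath} inside $H-(W_i\setminus\{v_i\})$ to find a short path $R_{i+1}$ from a small ball around $v_i$ to $B_i$, ending at some $v_{i+1}'\in B_i$; use Lemma~\ref{lem:avoidWshortpath} once more inside $B_i-\{v_{i+1}'\}$ to find a short path back from a neighbour of $v_{i+1}'$ to a vertex close to the end of $R_{i+1}$, producing a short cycle $D_{i+1}$ which shares exactly two consecutive vertices with $R_{i+1}$. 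Declare the new endpoint $v_{i+1}$ to be the other end of $R_{i+1}$ (or a suitable interior vertex) and set $W_{i+1}=W_i\cup V(R_{i+1})\cup V(D_{i+1})$. Each step adds $O(\log^3 n/\eps_1)$ vertices to $W$; for $\ell=(\eps_1/20)^{16}\cdot 2n/\log^{16}n$ the total $|W_\ell|=O(n/\log^{13}n)$ stays well below the thin-set thresholds required by the two lemmas at every step. A final application of Lemma~\ref{lem:avoidWshortpath} in $H-W_\ell$ joins $v_\ell$ back to $u$, closing the chain into the desired $\ell$-wheel.

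The main obstacle is ensuring that each newly attached cycle is genuinely internally vertex-disjoint from everything built so far and shares exactly two consecutive vertices with the main path; this needs a careful reservation scheme in which, whenever Lemma~\ref{lem:avoidWshortpath} is invoked, the forbidden set is enlarged not only by the new path but also by a small buffer around the current endpoint, so that subsequent paths cannot short-circuit earlier attached cycles or the main path. The polylogarithmic exponent $16$ (rather than the naive $3$ coming from a single application of Lemma~\ref{lem:avoidWshortpath}) arises from the accumulated cost of several invocations of the short-path lemma per step, each losing a factor of order $\log^2 n/\eps_1$ through the expansion ratio $\eps(x)=\eps_1/\log^2(15x/k)$, together with the nested localisation inside a large ball; a careful accounting of these losses across the $\ell$ iterations produces the stated bound $\ell\ge(\eps_1/20)^{16}\cdot 2n/\log^{16}n$.
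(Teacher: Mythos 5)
Your construction hinges on repeatedly finding a short path that \emph{starts at the current endpoint $v_i$} (or a small ball around it) while avoiding the set $W_i$ of everything built so far, and this step cannot be justified with the tools you cite. Lemma~\ref{lem:avoidWshortpath} only connects two sets $X,X'$ of size at least $x$ under the condition $|W|\le\frac14\eps(x)x=\frac{\eps_1x}{4\log^2x}$; since $|W_i|$ grows to order $n/\log^{13}n$ in your scheme, both terminal sets must have size at least roughly $n/\log^{11}n$. A ``small ball around $v_i$'' is therefore useless here, and there is no lemma in the paper (nor an easy argument) that produces a ball of the required size \emph{centred at a prescribed vertex} inside $H-W_i$: Lemma~\ref{lem:Balln/10} only guarantees a large ball around \emph{some} vertex, and the sublinear expansion of a small set containing $v_i$ can be entirely swallowed by $W_i$ (all neighbours of $v_i$ other than its predecessor on the path could lie in $W_i$). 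The same objection applies to your final step of joining $v_\ell$ back to $u$. Your accounting of the exponent $16$ is also not an argument: each of your steps adds only $O(\log^3n)$ vertices, which would give $\ell=\Theta(n/\log^3n)$, and the claim that ``accumulated losses'' across invocations reduce this to $n/\log^{16}n$ is never substantiated.

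The paper's proof is organised precisely so that one never has to connect from a prescribed vertex. It first finds $t=n/p^{10}$ \emph{disjoint} cycles of length between $p^5$ and $p^6$ (with $p=\frac{20}{\eps_1}\log n$) via a maximality argument in which every application of Lemma~\ref{lem:avoidWshortpath} joins two freshly chosen large balls; it then links $t/p^3$ of these cycles into a chain by a DFS-type exploration of the cycle collection, where a failed attempt to connect the top cycle of the stack to an unexplored cycle merely causes that cycle to be popped, and the actual guarantee comes only at the end, from applying Lemma~\ref{lem:avoidWshortpath} to the two \emph{large unions} of explored and unexplored cycles to reach a contradiction; finally it closes the chain into a wheel by connecting the union of the cycles in the first half of the chain to the union of those in the second half, again two large sets. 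If you want to salvage a greedy construction, you would need to replace the single endpoint $v_i$ by a large reusable frontier at every stage, which is essentially what the disjoint-cycles-plus-DFS architecture provides; as written, the proposal has a genuine gap at its central step.
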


\subsection{Crux function}
We also need some control on the crux function and the expander mixing lemma.
\begin{lem} [Expander mixing lemma~\cite{AC88}]\label{EML}
   For any $(n,d,\lambda)$-graph $G$ and two vertex subsets $X$ and $Y$, we have $\bigg|e(X,Y) -\frac{d}{n}|X||Y|\bigg|\le   \lambda \sqrt{|X||Y|\left(1-\frac{|X|}{n}\right)\left(1-\frac{|Y|}{n}\right)}$.
\end{lem}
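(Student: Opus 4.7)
The plan is to apply the spectral characterization of an $(n,d,\lambda)$-graph to the bilinear form counting edges between $X$ and $Y$. Let $A$ be the adjacency matrix of $G$. Since $G$ is $d$-regular and $A$ is symmetric, $A$ admits an orthonormal eigenbasis $v_1 = \mathbf{1}/\sqrt{n}, v_2, \ldots, v_n$ with real eigenvalues $d = \mu_1, \mu_2, \ldots, \mu_n$, where $|\mu_i| \le \lambda$ for all $i \ge 2$ by the definition of an $(n,d,\lambda)$-graph.

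First I would write $e(X,Y) = \mathbf{1}_X^T A \mathbf{1}_Y$, where $\mathbf{1}_X, \mathbf{1}_Y$ denote the characteristic vectors of $X, Y$. Next, I decompose these into their components along and orthogonal to the constant direction: $\mathbf{1}_X = \tfrac{|X|}{n}\mathbf{1} + f$ and $\mathbf{1}_Y = \tfrac{|Y|}{n}\mathbf{1} + g$, with $f, g \perp \mathbf{1}$. By Pythagoras,
\[ \|f\|^2 = \|\mathbf{1}_X\|^2 - \tfrac{|X|^2}{n} = |X|\left(1-\tfrac{|X|}{n}\right), \]
and symmetrically $\|g\|^2 = |Y|(1-|Y|/n)$.

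Using $A\mathbf{1} = d\mathbf{1}$ together with the orthogonality $f, g \perp \mathbf{1}$, the cross terms in the expansion of $\mathbf{1}_X^T A \mathbf{1}_Y$ vanish, leaving
\[ e(X,Y) = \frac{d|X||Y|}{n} + f^T A g. \]
Since $f$ and $g$ both lie in the span of $v_2, \ldots, v_n$ (which is $A$-invariant), expanding them in this eigenbasis as $f = \sum_{i \ge 2} a_i v_i$, $g = \sum_{i \ge 2} b_i v_i$ gives $f^T A g = \sum_{i \ge 2} \mu_i a_i b_i$. Applying Cauchy--Schwarz with the pointwise bound $|\mu_i| \le \lambda$ yields $|f^T A g| \le \lambda \|f\|\|g\|$. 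Substituting the norm formulas above gives the claimed inequality.

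There is no real obstacle here; this is a textbook spectral argument. The only delicate point is recognising that the orthogonal complement of $\mathbf{1}$ is $A$-invariant (which follows from $A$ being symmetric with $\mathbf{1}$ as an eigenvector), so that the operator-norm bound $\lambda$ transfers to the quadratic form $f^T A g$ and thereby to the deviation of $e(X,Y)$ from its ``expected'' value $\tfrac{d}{n}|X||Y|$.
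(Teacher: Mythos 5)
Your argument is correct and complete: the orthogonal decomposition of $\mathbf{1}_X$ and $\mathbf{1}_Y$ along $\mathbf{1}$, the vanishing of the cross terms via $A\mathbf{1}=d\mathbf{1}$, and the Cauchy--Schwarz bound $|f^TAg|\le\lambda\|f\|\|g\|$ on the complement of the top eigenspace together give exactly the stated inequality. The paper offers no proof of this lemma, citing it directly from Alon--Chung, and yours is the standard spectral argument; the only convention worth flagging is that the identity $e(X,Y)=\mathbf{1}_X^TA\mathbf{1}_Y$ counts ordered pairs (so edges inside $X\cap Y$ are counted twice), which is harmless here since the paper only applies the lemma to disjoint sets.
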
 

\begin{prop}\label{prop:ca(G)~an}
    For every graph $G$ with average degree $d$ and every $0 < \alpha< \alpha' < 1$, we have 
    $$c_{\a}(G) \le \bceil{\frac{\a}{\a '}(c_{\a '}(G)-1)+1}.$$
    In particular, for a graph $G$ of order $n$, for every $0<\a<1$, 
     $c_{\a}(G) \le \bceil{\a(n-1)+1}.$
\end{prop}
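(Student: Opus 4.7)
The plan is to use a simple random sampling / averaging argument inside a minimum $\alpha'$-crux. Let $H$ be a minimum $\alpha'$-crux of $G$, so $|V(H)| = s := c_{\alpha'}(G)$ and $d(H) \ge \alpha' d(G)$, i.e.\ $2e(H) \ge \alpha' d(G)\cdot s$. Set $m := \bceil{\tfrac{\alpha}{\alpha'}(s-1)+1}$, which is precisely chosen so that $\tfrac{m-1}{s-1} \ge \tfrac{\alpha}{\alpha'}$. Our goal is to exhibit an induced subgraph of $H$ on at most $m$ vertices whose density is at least $\alpha d(G)$; such a subgraph is then an $\alpha$-crux of $G$, yielding $c_\alpha(G) \le m$ as required.

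To produce this subgraph I would pick a uniformly random vertex subset $A \subseteq V(H)$ of size $m$. Each edge $uv \in E(H)$ lies in $H[A]$ with probability $\binom{s-2}{m-2}/\binom{s}{m} = \tfrac{m(m-1)}{s(s-1)}$, so by linearity of expectation
\[
\E[e(H[A])] = e(H)\cdot\frac{m(m-1)}{s(s-1)}, \qquad \E[d(H[A])] = \frac{2\E[e(H[A])]}{m} = \frac{m-1}{s-1}\cdot d(H).
\]
By the choice of $m$ and the fact that $d(H)\ge \alpha' d(G)$, this expectation is at least $\tfrac{\alpha}{\alpha'}\cdot \alpha' d(G) = \alpha d(G)$. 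Averaging gives a specific set $A$ with $d(H[A]) \ge \alpha d(G)$, so $H[A]$ is an $\alpha$-crux on $m$ vertices and the first inequality follows.

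For the ``in particular'' statement, I would just run the identical averaging argument with $H = G$ in place of the minimum $\alpha'$-crux, using the trivial fact that $d(G) \ge \alpha d(G)$ for all $\alpha \in (0,1)$. Here $s=n$ and the same random $m$-subset computation yields $c_{\alpha}(G) \le \bceil{\alpha(n-1)+1}$.

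There is no real obstacle here: the statement is essentially a convexity/averaging observation about how density behaves under uniform vertex sampling, and the only thing to be careful about is choosing the ceiling correctly so that $\tfrac{m-1}{s-1}\ge \tfrac{\alpha}{\alpha'}$, which is exactly the form of $m$ given in the proposition.
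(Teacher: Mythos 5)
Your proposal is correct and is essentially identical to the paper's own argument: both take a minimum $\alpha'$-crux, sample a uniformly random vertex subset of size $m=\bceil{\frac{\a}{\a'}(s-1)+1}$, compute the expected average degree as $\frac{m-1}{s-1}\,d(H)\ge \a d(G)$, and conclude by averaging. The ``in particular'' case is likewise handled the same way, with the whole graph playing the role of the crux.
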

\begin{proof}
    Let $G'$ be the graph on $n:= c_{\alpha '}(G)$ vertices for which the average degree is at least $\a' d.$
    By definition of the crux, it suffices to prove that there exists a subgraph $H$ of $G'$ with at most $k:=\bceil{\frac{\a}{\a '}(n-1)+1}$ vertices for which the average degree is at least $\a d.$ 
    For this, consider all $\binom{n}{k}$ possible induced subgraphs $H$ of $G'$ with $k$ vertices.
    The probability that a particular edge $uv$ of $G$ belongs to such a subgraph equals $\frac{\binom k2}{\binom {n}{2}}.$
    The expected size of $H$ equals 
    $\E(E(H)) =e(G') \frac{\binom k2}{\binom {n}{2}}.$
    Hence the expected average degree equals
    \begin{align*}
        \E(d(H)) = \frac{2\E(E(H))}{k}
        \ge\frac{\alpha' d n }{k}\frac{\binom k2}{\binom {n}{2}}
        =\frac{ \alpha 'd(k-1)}{n-1}\ge\a d. 
    \end{align*}
    This implies that there exists at least one subgraph $H \subset G$ with order $k$ and $d(H) \ge \a d$.
\end{proof}

We remark (even while not needed in the remaining of the expository) that Proposition~\ref{prop:ca(G)~an} is sharp for $G=K_{d+1},$ since $H=K_{\a d+1}$ is the minimum subgraph of $G$ with average degree at least $\a d.$
More generally, it is asymptotically sharp for $(n,d,\lambda)$-graphs.

\begin{prop}\label{prop:ca(G)}
   Let $0 < \a < 1$. Given $\eps>0$, if $\frac{\lambda}{d}< \eps \a$, then for every $(n,d,\lambda)$-graph $G$ $$c_{\a}(G) >(1-\eps)\a n.$$
\end{prop}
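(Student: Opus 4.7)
The plan is to apply the expander mixing lemma with $Y=X$ and unpack what it says about the average degree of any small induced subgraph. Pick any vertex subset $X\subseteq V(G)$ with $|X|=s\le (1-\eps)\a n$. Since $e(X,X)=2e(G[X])$ (each edge of $G[X]$ contributes to both endpoints), the average degree of $G[X]$ is $d(G[X])=e(X,X)/s$.

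Applying Lemma~\ref{EML} with $Y=X$ gives
\begin{equation*}
    e(X,X)\le \frac{d}{n}s^{2}+\lambda\, s\!\left(1-\frac{s}{n}\right),
\end{equation*}
so dividing by $s$ yields
\begin{equation*}
    d(G[X])\le \frac{d s}{n}+\lambda\!\left(1-\frac{s}{n}\right)\le \frac{d s}{n}+\lambda.
\end{equation*}
Using the hypotheses $s\le (1-\eps)\a n$ and $\lambda<\eps\a d$, this is bounded by $(1-\eps)\a d+\eps\a d=\a d$, and the bound is strict because $\lambda<\eps\a d$. Hence $d(G[X])<\a d$.

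Since this holds for every $X$ of size at most $(1-\eps)\a n$, no subgraph on that many vertices qualifies as an $\a$-crux, giving $c_{\a}(G)>(1-\eps)\a n$. There is no real obstacle here: the whole argument is a one-line application of the expander mixing lemma, and the only small care needed is the convention that $e(X,X)$ counts each edge of $G[X]$ twice so that dividing by $|X|$ indeed recovers the average degree.
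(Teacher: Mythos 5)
Your proof is correct, and it uses the same key tool as the paper -- the expander mixing lemma (Lemma~\ref{EML}) -- but applies it slightly differently. The paper argues by contradiction on the disjoint pair $(S, V\setminus S)$: if $G[S]$ had average degree at least $\alpha d$, then by $d$-regularity only $(1-\alpha)d|S|$ edges could leave $S$, which is fewer than the mixing lemma forces. You instead apply the lemma to the pair $(X,X)$ and bound $e(G[X])$ from above directly, which gives a clean, contradiction-free statement that \emph{every} set of size at most $(1-\varepsilon)\alpha n$ induces average degree below $\alpha d$ (and, as you note, induced subgraphs maximize average degree over subgraphs on a fixed vertex set, so this suffices for the crux). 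The one point to be careful about is exactly the one you flag: taking $Y=X$ requires the version of the mixing lemma valid for non-disjoint sets, with $e(X,X)$ counting each internal edge twice; this is the standard Alon--Chung form and is consistent with how Lemma~\ref{EML} is stated, but the paper's own proof sidesteps the convention entirely by using disjoint sets. Both arguments are one-line computations and equally valid.
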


\begin{proof}
  Assume there exists a set $S$ such that $d(G[S]) \ge \a d$ and $\abs{S}<(1-\eps)\a n.$ Then $$\abs{e(S, V \backslash S)} \le (1-\a) d \abs{S}   = (1-\a+\a \eps) d \abs{S} - \a \eps d \abs{S}<  d \abs{S} \frac{\abs{V \backslash S}}{\abs{V}} - \lambda \abs{S}.$$ This is a contradiction with the expander mixing lemma.
\end{proof}

\subsection{Depth First Search}
One of the main ideas in this paper is an algorithm that is similar to Depth First Search (DFS); see step 2 in Section~\ref{sec:manydiscycles}). DFS is a graph exploration algorithm that visits all the vertices of an input graph. 
Here we summarise the DFS algorithm for a graph $G=(V,E)$.
Let $S$ be a stack (initially the empty set), consisting of vertices in $V$.
Let $U$ be a set (initially $V$) of unexplored vertices in $V$, and let $X$ be a set (initially the empty set) of explored vertices in $V$.
In every step, if $S$ is empty and $|U|>0$, then we take an arbitrary element from $U$ and put it into $S$.
If the top vertex of $S$ has a neighbour in $U$, move such a neighbour from $U$ to $S$.
If the top vertex of $S$ has no neighbour in $U$, move the top vertex of $S$ to $X$. Stop when $X=V$.

The following properties hold throughout the process.
\begin{enumerate}
    \item The stack $S$ forms a path in $G$.
    \item There is no edge of $G$ between $U$ and $X$.
\end{enumerate}

\section{Finding many Hamiltonian subsets with a common vertex}\label{sec:manydiscycles}
In this section, we will prove Lemma~\ref{thr:expander-cycle} in three steps and derive Theorem~\ref{cor:cycles with v} from it. Throughout the proof, we let 
$$p= \frac{20}{\eps_1} \log n,\quad t=\frac{n}{p^{10}},\quad r = \frac{2}{\eps_1}\log^3 n.$$
We also assume that $\eps_1<1$ and $n$ is sufficiently large, such that the inequalities used in the proof are true.

First, we prove that expanders contain many disjoint cycles of appropriate length.

\subsection*{Step 1: finding many disjoint cycles}

\begin{prop}\label{prop:manydisjcycles}
    Let $H$ be an $(\eps_1,15)$-expander of order $n$.
    Then $H$ contains at least $t$ disjoint cycles, all of whose lengths are between $p^5$ and $p^6.$
\end{prop}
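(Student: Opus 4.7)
My plan is to construct the $t$ cycles iteratively. After finding $i<t$ cycles, let $W$ denote the union of their vertex sets, so $|W|\le i p^6 < t p^6 = n/p^4$. Since $p=(20/\eps_1)\log n$ satisfies $p^3\ge \log n$, this yields $|W|\le \eps_1 n/(20\log^2 n)$, matching the hypothesis of Lemma~\ref{lem:Balln/10} throughout the process, so we always have room to work inside $H-W$.

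At each iteration I need to find one more cycle of length in $[p^5,p^6]$ disjoint from $W$, which I plan to do in two sub-steps. First, I build a long path via DFS. Running DFS on $H-W$ produces at each step a stack $S$, an explored set $X$, and an unexplored set $U$ with $S\cup X\cup U=V(H)\setminus W$. At the moment when $|U|$ first equals $\lceil(n-|W|)/2\rceil$, the DFS invariant forbids $X$-$U$ edges in $H$, so $N_H(U)\subseteq S\cup W$, and the expansion of $H$ gives
\[
|S|\;\ge\; |N_H(U)|-|W|\;\ge\; \eps(|U|)\cdot|U|-|W|.
\]
For $|U|\approx n/2$ this lower bound is roughly $\eps_1 n/(2\log^2 n)$, which dwarfs both $|W|\le n/p^4$ and $p^5$. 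Hence $|S|\gg p^5$ at this moment. Since $|S|$ starts at $0$ and changes by $\pm 1$ per DFS step, at some earlier moment it first equals $p^5$; at that moment the stack is a path $P\subseteq H-W$ of length $p^5-1$, with some endpoints $x$ and $y$.

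Second, I close $P$ into a cycle by finding a short $xy$-path in $H$ avoiding $W\cup(V(P)\setminus\{x,y\})$, using Lemma~\ref{lem:avoidWshortpath}. Since the lemma requires endpoint sets of size $x'$ satisfying $|W_{\text{avoid}}|\le \tfrac14\eps(x')x'$, I first enlarge $\{x\}$ and $\{y\}$ to BFS balls of size roughly $n/\log^2 n$ inside $H-W-V(P)$, which is feasible by expansion of $H$ together with a Lemma~\ref{lem:Balln/10}-style argument. The lemma then yields a connecting path of length at most $r$, so the resulting cycle has length between $p^5$ and $p^5+r-1$, comfortably inside $[p^5,p^6]$ because $r\ll p^6-p^5$.

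The main obstacle is the long-path step: rigorously verifying that the DFS stack really does exceed $p^5$ at the chosen moment. All the ingredients---the DFS invariant and the expansion of $H$---are natural, but the quantitative inequality $\eps_1 n/\log^2 n \gg p^5 + n/p^4$ (which holds for sufficiently large $n$) together with the technical conditions $|U|\ge 15/2$ and $|W|$ negligible has to be checked with care. The closing step is then comparatively routine bookkeeping to enlarge endpoints while avoiding forbidden vertices and remaining inside the hypotheses of Lemma~\ref{lem:avoidWshortpath}.
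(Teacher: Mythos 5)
Your Step 1 (the DFS long‑path argument) is sound and is not where the difficulty lies: the invariant $N_H(U)\subseteq S\cup W$ at the moment $|U|\approx (n-|W|)/2$, together with expansion, does force $|S|\gg p^5$ at some point, and the stack is then a path of the right length avoiding $W$. The genuine gap is in your closing step. You cannot ``enlarge $\{x\}$ and $\{y\}$ to BFS balls'' around the \emph{specific} endpoints of the DFS path: the expander condition only applies to sets of size at least $15/2$, there is no minimum degree hypothesis in the proposition, and, worse, every neighbour of $x$ could lie in $W\cup V(P)$ (this is entirely consistent with $H$ being an $(\eps_1,15)$-expander), in which case the ball around $x$ in $H-W-(V(P)\setminus\{x,y\})$ is just $\{x\}$ and never grows. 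Lemma~\ref{lem:Balln/10} is of no help here: it produces a large ball \emph{somewhere} in $H-W$, not around a prescribed centre. The obvious repair --- taking long initial and final segments of $P$ as the sets $X,X'$ in Lemma~\ref{lem:avoidWshortpath} --- also fails quantitatively: to satisfy $|W\cup \mathrm{int}(P)|\le \tfrac14\eps(x)x$ with $|W|\approx n/p^4$ you need segments of size $\Omega(n/p^2)$, and a path segment of that size has diameter $\Omega(n/p^2)\gg p^6$, so the connecting path may attach anywhere along it and you lose the upper bound $p^6$ on the cycle length.

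This is exactly the problem the paper's proof is engineered to avoid. It fixes, \emph{before} building the path, two sets $D,D'$ of size $n/p^2$ but diameter only $p^3$ (via Lemmas~\ref{lem:Balln/10} and~\ref{lem:subgraphBall}), so they are large enough to serve as endpoint sets for Lemma~\ref{lem:avoidWshortpath} yet contribute at most $O(p^3)$ to the cycle length; it then lengthens a $D$--$D'$ path in increments of at most $2p^3$ (by rerouting through a fresh small-diameter set $X$ each time) so that the length can be stopped inside the window $[p^5,p^5+2p^3]$, and finally closes the cycle with a second short $D$--$D'$ path plus connectors of length $O(p^3)$ inside $D$ and $D'$. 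If you want to keep your DFS path, you would need some analogous anchoring device at its ends; as written, the closing step does not go through.
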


\begin{proof}
Let $\mathcal{C}$ be a maximal collection of disjoint cycles of length between $p^5$ and $p^6$. Suppose to the contrary that $|\mathcal{C}|<t$. 
Let $W$ be the set of vertices contained in these cycles. Note that $\abs{W} < tp^6=\frac{n}{p^4}.$
By applying Lemma~\ref{lem:Balln/10} and Lemma~\ref{lem:subgraphBall} twice, we can find two disjoint sets $D$ and $D'$ with diameter at most $p^3$ and size $\frac{n}{p^2}$ which avoid $W$.
For this, it is sufficient to note that $\frac{n}{p^2}+\frac{n}{p^4}< \eps_1 \frac{n}{20 \log^2 n}$ and thus once $D$ is constructed, one can find a large ball avoiding $W \cup D$ (by Lemma~\ref{lem:Balln/10}) and take a set $D'$ of the right size (by Lemma~\ref{lem:subgraphBall}).

Let $x=\frac n{p^2}.$ Note that $\eps(x)/4=\frac{\eps_1}{4\log^2 x} \ge \frac{\eps_1}{4\log^2 n}>\frac{1}{p^2}$ and thus $\abs{W}< \frac{n}{p^4}=\frac{x}{p^2}< x \eps(x)/4.$ Hence by Lemma~\ref{lem:avoidWshortpath} we can find a path of length at most $p^3$ connecting $D$ and $D'$ while avoiding $W$.

Now iteratively, we can build longer paths between two sets of size $\frac{n}{p^2}=tp^8$ and diameter at most $p^3$ until the length is between $p^5$ and $p^5+2p^3$, in such a way that the length increases in each step by at least one and at most $2p^3$.
Let $D$ and $D'$ be the two current sets, with a path from $v\in D$ to $v'\in D'$, say $P$.
Let $X$ be a set of size $tp^8$ and diameter at most $p^3$ which avoids $D,D'$, $W$ and $V(P)$. The latter is possible by Lemma~\ref{lem:Balln/10} and Lemma~\ref{lem:subgraphBall} as $2\frac{n}{p^2}+\frac{n}{p^4} + p^5<\eps_1 \frac{n}{20 \log^2 n}.$
Take a path of length at most $p^3$ between $X$ and $D \cup D'$ avoiding $W$ and $P$, which is again possible by Lemma~\ref{lem:avoidWshortpath}.
Without loss of generality, this path is between $x \in X$ and $ d \in D'$.
As such, we can consider $X$ and $D$ as the new sets and the union of the paths between $x$ and $d$, $d$ and $v'$ and $v'$ and $v$ as the new path $P'$. Then $|P|< |P'| \le |P| +p^3 +p^3$.

By iterating this, we reach a path $P_1$ of length between $p^5$ and $p^5+2p^3$ between the two sets $D$ and $D'$.
Applying Lemma~\ref{lem:avoidWshortpath} a final time, gives a path $P_2$, avoiding $W$ and $P_1$, between $D$ and $D'$ of length at most $p^3.$
The union of $P_1, P_2$ and $2$ connecting paths in $D$ and $D'$ gives a cycle of length between $p^5$ and $p^5+6p^3<p^6$. This cycle is disjoint from those in $\mathcal{C}$, contradicting the maximality of $\mathcal{C}$.
\end{proof}

  \definecolor{col1}{rgb}{0.5, 0.5, 0.5}
 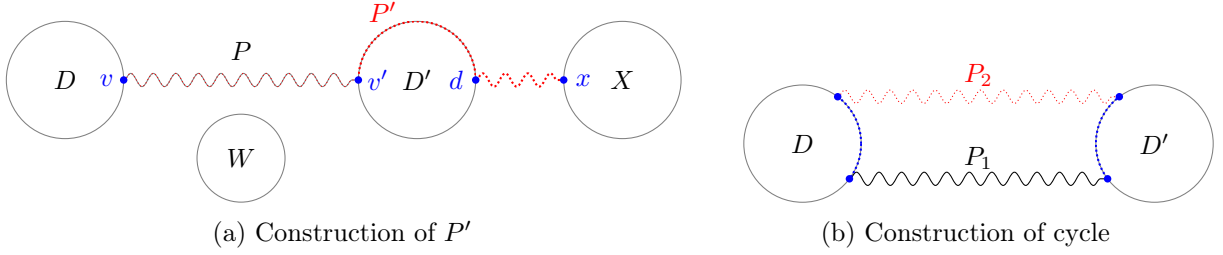
\begin{figure}[h]
\begin{minipage}[b]{0.61\textwidth}
\centering
\begin{tikzpicture}[scale=1.3]

\draw[col1] (-1,0) circle (0.6cm);
	\draw[col1] (2.6,0) circle (0.6cm);
	\draw[col1] (0.8,-0.8) circle (0.45cm);
	\draw[col1] (4.7,0) circle (0.6cm);

	\draw[decorate, decoration=snake, segment length=3mm,col1] (-0.4,0)--(2,0);
	
	\draw[red, densely dotted, decorate, decoration=snake, segment length=3mm] (-0.4,0)--(2,0);

	\draw[red,densely dotted,thick, decorate, decoration=snake, segment length=3mm] (3.2,0)--(4.1,0);

	\begin{scope}
    \clip (2,0) rectangle (3.2,0.6);
    \draw[red,densely dotted,thick] (2.6,0) circle(0.6cm);
    \end{scope}
    
  \node[inner sep= 1pt]() at (0.8,-0.8){\small $W$};
  \node[inner sep= 1pt,red]() at (2.25,0.7){\small $P'$};
  \node[inner sep= 1pt]() at (-1,0){\small $D$};
  \node[inner sep= 1pt]() at (0.8,0.3){\small $P$};
  \node[inner sep= 1pt]() at (2.6,0){\small $D'$};
  \node[inner sep= 1pt]() at (4.7,0){\small $X$};
  
  \foreach \x in{-0.4,2,3.2,4.1}
  {
  \node[inner sep= 1pt,blue]() at (\x,0)[circle,fill]{};
  }
     
	\node[inner sep= 1pt,blue](v) at (-0.575,0)[]{\small $v$};
	\node[inner sep= 1pt,blue](v') at (2.2,0)[]{\small $v'$};
	\node[inner sep= 1pt,blue]() at (3,0)[]{\small $d$};
	\node[inner sep= 1pt,blue]() at (4.3,0)[]{\small $x$};
\end{tikzpicture}
\subcaption{Construction of $P'$}
\end{minipage}
\begin{minipage}[b]{0.35\textwidth}
\centering
\begin{tikzpicture}[scale=1.3]
    \draw[col1] (-1,0) circle (0.6cm);
	\draw[col1] (2.6,0) circle (0.6cm);

	\draw[decorate, decoration=snake, segment length=3mm] (-0.52,-0.36)--(2.12,-0.36);

	\begin{scope}
    \clip (-1,-0.36) rectangle (-0.4,0.48);
    \draw[blue,densely dotted,thick] (-1,0) circle(0.6cm);
    \end{scope}

	\begin{scope}
    \clip (2,-0.36) rectangle (2.6,0.48);
    \draw[blue,densely dotted,thick] (2.6,0) circle(0.6cm);
    \end{scope}
    
    \draw[red, densely dotted, decorate, decoration=snake, segment length=3.01mm] (-0.64,0.48)--(2.24,0.48);

  \node[inner sep= 1pt,red]() at (0.8,0.7){\small $P_2$};
  \node[inner sep= 1pt]() at (-1,0){\small $D$};
  \node[inner sep= 1pt]() at (0.8,-0.15){\small $P_1$};
  \node[inner sep= 1pt]() at (2.6,0){\small $D'$};

\foreach \x in{-0.64,2.24}
  {
  \node[inner sep= 1pt,blue]() at (\x,0.48)[circle,fill]{};
  }
  
  \foreach \x in{-0.52,2.12}
  {
  \node[inner sep= 1pt,blue]() at (\x,-0.36)[circle,fill]{};
  }

\end{tikzpicture}
\subcaption{Construction of cycle}
\end{minipage}
 \caption{The sets $D,D',X,W$ and path $P$ and extension in the final step in Proposition~\ref{prop:manydisjcycles}}
    \label{fig:constr_prop3.1}
\end{figure}

\subsection*{Step 2: finding a long chain}

Having found many disjoint cycles in Step 1, we now prove that we can connect some of them in a chain. Here we use an algorithm, similar to DFS, where we explore the set of cycles (instead of set of vertices).

\begin{prop}\label{find chain}
    Let $0<\eps_1 <1$ and $H$ be an $(\eps_1,15)$-expander of order $n$.
    Let $p= \frac{20}{\eps_1} \log n, t=\frac{n}{p^{10}}$ and $r = \frac{2}{\eps_1}\log^3 n$.
    If $H$ contains at least $t$ disjoint cycles, all of whose lengths are between $p^5$ and $p^6$, then there exists an $\ell$-chain with $ \ell \ge t/ p^3$ and each path between two consecutive cycles on the chain has length at most $r=\frac{2}{\eps_1} \log^3 n$. 
\end{prop}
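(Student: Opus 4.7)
The plan is to carry out a DFS-like exploration on the $t$ disjoint cycles provided by Proposition~\ref{prop:manydisjcycles}, where each cycle plays the role of a ``vertex'' and a short connecting path (of length at most $r$) between two cycles plays the role of an ``edge''. I will maintain a stack $S$ encoding a partial chain $C_{i_1}-P_1-C_{i_2}-\cdots-C_{i_k}$, a set $U$ of unexplored cycles (initially all $t$ cycles), and a set $X$ of explored cycles (initially empty). At each step, given the top cycle $C$ of $S$, I search for an unexplored cycle $C'\in U$ together with a path of length at most $r$ from $V(C)$ to $V(C')$ avoiding the vertices currently used by $S\setminus C$ as well as all connecting paths previously constructed. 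If such a pair is found, I push $C'$ with the new connecting path onto $S$; otherwise, I pop $C$ to $X$. The invariants that $S$ always encodes a valid chain and that each connecting path has length at most $r$ are clearly preserved.

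The claim is that the stack depth reaches $t/p^3$ at some moment; I prove this by contradiction. If the depth always stays below $t/p^3$, then the current stack's vertex budget is always at most $(t/p^3)(p^6+r) = O(n/p^7)$, and the cumulative vertex set $W_P$ of all connecting paths ever built satisfies $|W_P| \le t\cdot r = o(n/p^7)$. Consider the first moment when $|X| \ge t/2$; then $|U| \ge t/3$, so $|V(U)| \ge n/(3p^5)$ and $|V(X)| \ge n/(2p^5)$. Apply Lemma~\ref{lem:avoidWshortpath} with sets $V(X)$ and $V(U)$, avoided set $W := V(S) \cup W_P$ of size $O(n/p^7)$, and parameter $x = n/(3p^5)$. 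Since the threshold $\tfrac14\eps(x)x = \Omega(n/(\eps_1 p^7))$ dominates $|W|$, the lemma produces a path $Q$ of length at most $r$ from some $C_x \in X$ to some $C_u \in U$ avoiding $W$. The goal is to argue that $Q$ would have been a valid extension at the moment $C_x$ was popped, contradicting the DFS popping decision.

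The main technical obstacle I anticipate is reconciling the avoided set at the moment $\tau$ when $C_x$ was popped with the avoided set $W$ at the current moment: some cycles that were in $S^{(\tau)}$ at popping time may have since been popped themselves, so their vertices are no longer in $V(S^{(\text{now})}) \subseteq W$, and a path avoiding the current $W$ need not avoid $V(S^{(\tau)}\setminus C_x)$. To overcome this I plan to enforce monotonicity of the blocked set: once a connecting path is built it stays blocked forever (this is already accounted for in $W_P$ above), and I will choose $C_x$ carefully along the DFS tree — for instance as a cycle whose stack-ancestors at popping time remain entirely contained in the final $V(S)\cup W_P$. A natural choice is the first cycle popped from a chain that actually attained depth $\Theta(t/p^3)$, so that the ancestor cycles at its popping time are still in later snapshots of $S$. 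Verifying this DFS-tree bookkeeping, and confirming the resulting chain is valid (with cycles and connecting paths vertex-disjoint apart from attachment points), is where I expect the main effort to lie.
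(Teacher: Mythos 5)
Your overall framework --- DFS on the set of $t$ cycles, with a stack encoding a partial chain, and a contradiction obtained by applying Lemma~\ref{lem:avoidWshortpath} between $V(X)$ and $V(U)$ when the stack stays short --- is exactly the paper's strategy, and your counting (stopping when $|U|\approx t/3$, bounding $|W|$ by $O(n/p^7)$ against the threshold $\tfrac14\eps(x)x$ with $x= tp^5/3$) is in order. The genuine gap is your choice of forbidden set when searching for a connecting path: you only exclude $V(S\setminus C)$ and the previously built paths. This causes two problems. First, a connecting path may pass through a cycle of $U$ that is later pushed onto the stack, so the final ``chain'' need not have its cycles and connecting paths internally disjoint. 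Second, and more seriously, the popping condition is then not monotone: the non-existence of an extension at the moment a cycle is popped is relative to the stack at that time, which later changes, so the path produced by Lemma~\ref{lem:avoidWshortpath} at the end need not contradict any earlier popping decision. You correctly identify this obstacle, but the proposed repair does not work: under the contradiction hypothesis the stack \emph{never} reaches depth $t/p^3$, so there is no ``first cycle popped from a chain that attained depth $\Theta(t/p^3)$'' to serve as your witness $C_x$, and no alternative choice of $C_x$ with the required ancestor property is exhibited.

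The fix, which is what the paper does, is to demand from the outset that the \emph{internal} vertices of every connecting path avoid \emph{all} cycles of $\mathcal{C}$ (not just those on the stack) as well as all paths in $\mathcal{P}$. Since $V(\mathcal{C})$ is fixed and $\mathcal{P}$ only grows while $U$ only shrinks, the invariant ``there is no path of length at most $r$ between $X$ and $U$ whose internal vertices avoid $V(\mathcal{C})\cup V(\mathcal{P})$'' is preserved automatically after each pop, and the chain encoded by $S$ is genuinely vertex-disjoint. The final contradiction is then obtained by applying Lemma~\ref{lem:avoidWshortpath} with $W=V(S)\cup V(\mathcal{P})$ (the cycles of $X$ and $U$ are deliberately \emph{not} placed in $W$, as they are the source and target sets), and extracting from the resulting $X$--$U$ path a minimal subpath whose endpoints lie on a cycle of $X$ and a cycle of $U$ and whose internal vertices avoid $V(\mathcal{C})\cup V(\mathcal{P})$; this violates the invariant. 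With this modification your size estimates go through unchanged, since $|V(S)|\le (t/p^3)\cdot p^6$ and $|V(\mathcal{P})|\le t\cdot r$ are both well below $\eps(tp^5/3)\,tp^5/12$.
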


\begin{proof}
    Let $\mathcal{C}$ be a set consisting of $t$ disjoint cycles, all of whose lengths are between $p^5$ and $p^6$. To find the desired chain, we perform a process similar to DFS on $\C$.
    
    During the process, We keep track of the following four sets:
    \begin{itemize}
        \item a stack $S$ (initially the empty set), consisting of cycles (which are ordered) in $\mathcal{C}$,
        \item a set $U$ (initially $\mathcal{C}$) of unexplored cycles in $\mathcal{C}$,
        \item a set $X$ (initially the empty set) of explored cycles in $\mathcal{C}$,
        \item a set $\mathcal{P}$ (initially the empty set) of pairwise (vertex) disjoint paths. 
    \end{itemize}
    
    In every step, we do one of the following replacements.
    \begin{itemize}
        \item If $S$ is empty and $|U|> 0$, then take an arbitrary element from $U$ and put it into $S$.
        \item If $S$ is not empty and $|U|> 0$ and
        \begin{itemize}
            \item if there exists a path $P$ of length at most $r$, all of whose internal vertices do not belong to (a cycle of) $\mathcal{C}$ nor (a path in) $\mathcal{P}$, which connects the top element in $S$ and an arbitrary cycle $C$ in $U$, then we remove the cycle $C$ from $U$ and push it into $S$, and push the path $P$ into $\mathcal{P}$,
            \item if no such path $P$ exists, then take the top element from $S$ and put it into $X$.
        \end{itemize}
        \item If $|U|=0$, then stop.
    \end{itemize}
    Throughout the process, observe that
    \begin{enumerate}
        \item\label{DFS1} at any step of the process, there exists an $|S|$-chain which connects all cycles in $S$ and each path between two consecutive cycles has length at most $r$;
        \item there does not exist a path $P$, whose internal vertices are not in $\mathcal{C}$ and $\mathcal{P}$, with length at most $r$, which connects (a cycle in) $X$ and (a cycle in) $U$; and
        \item $|\mathcal{P}| \le |\mathcal{C}| - |U|$.
    \end{enumerate}

    We run this process until the point that $|U| = \frac t3$.
    Suppose to the contrary that $|S|< t/p^3$.
    Then $|X|=t-|U|-|S| \ge t/3$ and $|\mathcal{P}|\le 2t/3$. 
    Observe also that
    \begin{align*}
        &\sum_{C\in X} |C| ,   \sum_{C\in U} |C| \ge t/3 \cdot p^5 ,\\
        &\sum_{P\in\mathcal{P}} |P| \le 2t/3 \cdot r = \frac{4t}{3\eps_1}
    \log^3 n \le  \frac{\eps_1tp^5}{24 \log ^2 n}  \le \eps(tp^5/3)tp^5/24 \quad \mbox{and}\\
    &\sum_{ C \in S } |C| \le  tp^3 \le \eps(tp^5/3)tp^5/24.
    \end{align*}
    Let $W =\bigcup_{P\in \mathcal{P} } V(P) \cup \bigcup_{C\in S} V(C)$, then $ |W| \le \eps(tp^5/3)tp^5/12$.
    By Lemma~\ref{lem:avoidWshortpath}, there exists a path avoiding vertices in $W$ with length at most $r$ between $X$ and $U$, a contradiction. Hence $|S|\ge t/p^3$ and Observation~\ref{DFS1} above implies the result.
\end{proof}

\subsection*{Step 3: finding a long wheel}

Finally, we prove that one can add a path between two cycles (near the ends) of the chain, to find a wheel.

\begin{prop}\label{prop:wheel}
    Let $0<\eps_1 <1$ and $H$ be an $(\eps_1,15)$-expander of order $n$.
    Let $p= \frac{20}{\eps_1} \log n, t=\frac{n}{p^{10}}$, and $r = \frac{2}{\eps_1}\log^3 n$.
    If $H$ contains an $m$-chain with $m= t/p^3$ such that each cycle has length between $p^5$ and $p^6$ and each path between two consecutive cycles has length at most $r$.
    Then $H$ contains an $\ell$-wheel with $\ell \ge 2t/p^6$.
\end{prop}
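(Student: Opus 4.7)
The plan is to apply Lemma~\ref{lem:avoidWshortpath} to find a short path $P^*$ in $H$ connecting $V(C_1)$ and $V(C_m)$, and then pigeonhole on how $P^*$ meets the chain to identify a ``clean'' sub-path that closes a large portion of the chain into a wheel.

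First, apply Lemma~\ref{lem:avoidWshortpath} with $X = V(C_1)$, $X' = V(C_m)$ (each of size at least $p^5\ge 8$ for $n$ sufficiently large) and $W = \emptyset$ to obtain a path $P^*$ in $H$ of length at most $r$ from some $z_0\in V(C_1)$ to some $z_L\in V(C_m)$. To track how $P^*$ meets the chain, assign each chain vertex $v$ a \emph{chain position} $\phi(v)\in[1,m]$ by setting $\phi(v)=c$ when $v\in V(C_c)$ and $\phi(v)=c+\frac12$ when $v$ lies in the interior of the connecting path $P_c$. Listing the chain-intersection vertices $\mathcal{Z}=V(P^*)\cap V(\text{chain})$ in the order they appear on $P^*$ as $z_0, z_1, \dots, z_L$, we have $L+1 \le |V(P^*)|\le r+1$. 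Since $\phi(z_0)=1$ and $\phi(z_L)=m$, the telescoping identity $\sum_{k=0}^{L-1}(\phi(z_{k+1})-\phi(z_k)) = m-1$ combined with $L\le r$ produces, by pigeonhole, an index $k^*$ with $\phi(z_{k^*+1})-\phi(z_{k^*}) \ge (m-1)/r$.

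The wheel is then assembled as follows. Let $P^\dagger$ be the sub-path of $P^*$ from $z_{k^*}$ to $z_{k^*+1}$; by the ordering of $\mathcal{Z}$, the interior of $P^\dagger$ is disjoint from every chain vertex. Concatenating $P^\dagger$ with the simple path through the chain from $z_{k^*}$ to $z_{k^*+1}$ that traverses each intermediate cycle via one of its two arcs between the spine-attachment vertices produces a simple cycle $\mathcal{B}$ in $H$. Every cycle $C_c$ with integer $c$ strictly between $\phi(z_{k^*})$ and $\phi(z_{k^*+1})$ is untouched by $P^\dagger$ and meets $\mathcal{B}$ at precisely its two spine-attachment vertices, so it serves as an attached small cycle of a wheel with base $\mathcal{B}$. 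The number of such $C_c$ is at least $\phi(z_{k^*+1})-\phi(z_{k^*})-1 \ge (m-1)/r - 1$; since $r = \frac{2}{\eps_1}\log^3 n \le p^3/4000$ and $m = t/p^3 = n/p^{13}$ exceeds $p^3$ for $n$ sufficiently large, this lower bound is at least $2m/p^3 = 2t/p^6$, yielding the required $\ell$-wheel.

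The main subtlety to verify is that the assembly is a bona fide wheel. This reduces to two routine checks: $P^\dagger$ must meet the chain only at its two endpoints (which is immediate from the ordering of $\mathcal{Z}$), and the chain-side path must be a simple path disjoint from $P^\dagger$ outside these endpoints (available because each intermediate cycle offers two internally disjoint arcs between its spine-attachment vertices, while $P^\dagger$'s interior lies outside the chain).
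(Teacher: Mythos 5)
Your proof is correct, but it takes a genuinely different route from the paper's. The paper never has to deal with the connecting path meeting the chain: it sets $X_1$ to be the first $m/2-m/p^3$ cycles, $X_2$ the last $m/2-m/p^3$ cycles, and puts the middle $2m/p^3$ cycles \emph{together with all the paths $P_i$} into the forbidden set $W$, checks $|W|\le 2mp^3+mp^3\le \eps(mp^5/3)mp^5/12$, and invokes Lemma~\ref{lem:avoidWshortpath} to get a connecting path that (after the usual truncation to a shortest one) is internally disjoint from the whole chain and whose endpoints lie in cycles $C_i$, $C_j$ with $j-i\ge 2m/p^3+1$; the wheel is then immediate. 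You instead apply the lemma with $W=\emptyset$ between $V(C_1)$ and $V(C_m)$, accept that $P^*$ may repeatedly re-enter the chain, and recover a clean connector by pigeonholing the telescoping sum $\sum_k(\phi(z_{k+1})-\phi(z_k))=m-1$ over at most $r$ gaps (which is valid even though individual increments may be negative). Your route avoids the $\eps$-function computation for $|W|$ entirely and in fact gives the stronger bound $\ell\ge (m-1)/r-1\approx 4000\,m/p^3$, at the price of the positional bookkeeping and the case analysis of where $z_{k^*},z_{k^*+1}$ land (interior of a $P_c$ versus a cycle), which you handle adequately. One cosmetic slip: since your base cycle $\mathcal{B}$ traverses one full arc of each intermediate $C_c$, it is not true that $C_c$ ``meets $\mathcal{B}$ at precisely its two spine-attachment vertices''; the accurate statement is that $C_c$ meets the remainder of the wheel (i.e.\ $\mathcal{B}$ minus the arc of $C_c$ it contains, together with the other attached cycles) only at those two vertices, so that adding the unused arc of each $C_c$ to $\mathcal{B}$ yields a graph obtained from a cycle by replacing $b-a-1$ edges with cycles, as Definition~\ref{defn:wheel} requires. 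The numerical closing step ($r\le p^3/4000$ and $m=n/p^{13}\ge p^3$ for large $n$) is fine under the section's standing assumption that $n$ is sufficiently large in terms of $\eps_1$.
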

\begin{proof}
    Let $S= C_1P_1C_2\ldots P_{m-1}C_m$ be the $m$-chain with $|P_i|\le r$.
    Let 
    \begin{align*}
        &X_1= \bigcup_{i=1}^{m/2-m/p^3} V(C_i), 
        \ W = \left( \bigcup_{i=m/2-m/p^3+1}^{m/2+m/p^3} V(C_i) \right) \cup \bigcup_{i=1}^{m} V(P_i),
        \  X_2= \bigcup_{i=m/2+m/p^3+1}^{m} V(C_i).
    \end{align*}
    It is easy to check $|X_i| \ge m/3\cdot p^5 $ for $i=1,2$, and $|W| \le  2m/p^3 \cdot p^6 + m \cdot p^3  \le \eps( mp^5/3)mp^5/12$.
    By Lemma~\ref{lem:avoidWshortpath}, there exists a path $P$ avoiding $W$ between $X_1$ and $X_2$, say between $C_i$ and $C_j$ for some $1\le i\le m/2-m/p^3$ and $m/2+m/p^3+1\le j\le m$.
    Then the union of $P$ and $C_iP_iC_{i+1}\ldots P_{j-1}C_j$
    forms an $\ell$-wheel with $ \ell \ge 2m/p^3 = 2t/p^6$, where possibly $C_i$ and/or $C_j$ are deleted when $P$ has an end-vertex equal to $C_i \cap P_i$ and/or $C_{j} \cap P_{j-1}$.
\end{proof}
So we can find an $\ell$-wheel with $ \ell \ge 2n/p^{16} =(\frac{\eps_1}{20})^{16} (2n/\log^{16} n) $ in $H$. Lemma~\ref{thr:expander-cycle} now follows immediately from Propositions~\ref{prop:manydisjcycles},~\ref{find chain} and~\ref{prop:wheel}.

\begin{proof}[Proof of Theorem~\ref{cor:cycles with v}]
    Take $C=30\log3$ and $\eps_1 = 1/(10C)$.
    By Lemma~\ref{lem:existence_expander} there exists an $(\eps_1, 15)$-expander $H \subset G$ of order $n$ for which $\abs{H} \ge t=c_{\alpha}(G) \ge \a d$, as $d(H) \ge \frac{1}{2} d(G)$.
    Then by Lemma~\ref{thr:expander-cycle} there exists an $\ell$-wheel in $G$ with $\ell \ge \left(\frac{\eps_1}{20} \right)^{16} (2n/\log^{16} n)  $. Since an $\ell$-wheel contains $2^\ell$ different cycles and any vertex in an $\ell$-wheel is contained in at least $2^{\ell-1}$ different cycles, we conclude.
\end{proof}

\section{ Proof of the main theorem}\label{sec:mainthr}
In this section, we prove Theorem~\ref{main:1}. We shall perform three counting strategies and show that at least one results in the desired lower bound for the number of Hamiltonian subsets. We start with a smallest counterexample $G$ and in the first two strategies, we shall find many vertices belonging to many (different) Hamiltonian subsets. If those strategies fail to produce enough Hamiltonian subsets, then $G$ must contain a dense subgraph with sufficiently Hamiltonian subsets.

\begin{proof}[Proof of Theorem~\ref{main:1}]
    Recall that $t=c_\alpha(G)$, $\alpha=1/5$, $\eps_1=\frac{1}{300\log3}$ and $\b=\left( \frac{\eps_1}{20} \right)^{16}$.
    Choose $d_0$ such that $\b x> \log^{16} x$ whenever $x \ge \a d_0$ and Corollary~\ref{cor:cycles with v} is true whenever $d \ge d_0$. Then choose the constant $B$ such that 
    $\b bt/\log^{16}(bt)> \b t/\log^{16}t + \log_2(5b)$ whenever $b \ge \frac{B}{5}$ and $t \ge \a d_0.$
    
    Let $G$ be a graph with average degree $d\ge d_0$ with minimum order $n$ among all graphs for which the theorem is not true. That is, $h(G)\ge \frac{n}{B} 2^{\b t/\log^{16}t}$, and for any proper subgraph of $G$, say $G'$, if the average degree of $G'$ is at least $d_0$, then $h(G')\ge \frac{|V(G')|}{B} 2^{\b t'/\log^{16}t'}$, where $t'=c_\alpha(G')$.
    
    We now consider three strategies.

    \medskip
    
    \noindent\textbf{Strategy 1:}
    We will choose a set of vertices $S$ in which every vertex belongs to at least $2^{\b t/\log^{16} t}$ different Hamiltonian subsets of $G$.
    Let $S$ be a set of vertices, which is initially the empty set.
    As long as $G - S$ has average degree at least $\frac d4,$ by Lemma~\ref{lem:existence_expander} there exists an $(\eps_1,15)$-expander $H \subset G- S$ with minimum degree at least $\frac d5$. So $H$ has order at least $c_{\a}(G).$
    By Corollary~\ref{cor:cycles with v}, there is a vertex $s$ in $H$ belonging to at least $2^{\b t/\log^{16} t}$ distinct Hamiltonian subsets.
    Now add $s$ to $S$.
    If at the end, $S$ contains at least $\frac{n}{B}$ vertices, we would reach a contradiction with the choice of $G$, being a graph with less than $\frac{n}{B} 2^{\b t/\log^{16} t}$ Hamiltonian subsets.
    
    We may then assume that $|S|<\frac{n}{B}$. Note also that $G-S$, having average degree less than $\frac d4$, contains less than $\frac{d(n-\abs{S})}{8}< \frac{m}{4}$ edges, where $m=e(G)$.
    
    \medskip
    
    \noindent\textbf{Strategy 2:}
    We restart the search for Hamiltonian subsets in the bipartite graph $G[S, V \backslash S]$.
    As long as the bipartite graph $G[S, V \backslash S]$ contains at least $\frac{m}{4}$ edges, and hence has average degree at least $\frac d4,$ by Lemma~\ref{lem:existence_expander} there exists an $(\eps_1,15)$-expander $H \subset G[S, V \backslash S]$ with minimum degree at least $\frac d5$ and hence $H$ has order at least $c_{\a}(G).$
    Take a vertex $s \in V \backslash S$,
    then by Corollary~\ref{cor:cycles with v} there are at least 
    $2^{\b t/\log^{16}t}$ Hamiltonian subsets containing $s.$
    Now add $s$ to $S$. 
    If $G[S, V \backslash S]$ contains at least $\frac{m}{4}$ edges, repeat this process.

    We claim that we do not count the same Hamiltonian subset twice. 
    For two different vertices $s_1$ and $s_2$ which we add to $S$ in strategy 2, let $S_i$ be the set $S$ before we move $s_i$, for $i=1,2$, then $s_i \notin S_i$. 
    We may assume $S_1\subseteq S_2$, then $s_1\in S_2$ and $s_2 \notin S_1$.
    Let $H$ be an Hamiltonian subset containing both $s_1$ and $s_2$.
    Since $s_1 \notin S_1, s_1 \in S_2$ and $s_1 \in H$, we have $|H \cap S_1|<|H \cap S_2|$, a contradiction to $|H \cap S_1| = |H \cap S_2| = |H|/2$.    
    If we can repeat this at least $\frac{n}{B}$ times, we have found the desired number of Hamiltonian subsets again, which would be the desired contradiction.

    \medskip
    
    \noindent\textbf{Strategy 3:}
    After performing the two previous strategies, we ended up with a set $S$ such that $G- S$ and $G[S, V \backslash S]$ both contain less than $\frac{m}{4}$ edges, so $G[S]$ contains at least $\frac{m}{2}$ edges. Also we know that $\abs{S}=\frac{1}{b} n \le 2\frac{n}{B}, $ for some $b \ge \frac B2.$
    Hence the average degree of $G[S]$ is $\gamma d$
    for some $\gamma \ge \frac b2.$
    By Proposition~\ref{prop:ca(G)~an}, this implies that 
    $ \bceil{\frac{1}{\gamma}( c_\a(G[S])-1) +1} \ge c_{\a/\gamma}(G[S]) \ge c_{\a}(G)$
    and thus $\gamma (c_\a(G)-2)<c_\a(G[S]) = b' c_{\a}(G)=b't $ for some $b' \ge \frac{4\gamma}{5} \ge \frac{2b}{5} \ge \frac B5$.
    Since $G$ is a minimal counterexample, $G[S]$ satisfies
    \begin{align*}
        h(G[S]) &\ge \frac{n/b}{B} 2^{\b b't/\log^{16}(b't)}
        \ge 5b' \frac{n/b}{B} 2^{\b t/\log^{16}t}
        \ge \frac{n}{B} 2^{\b t/\log^{16}t}.
    \end{align*}
    Since $h(G) \ge h(G[S])$, we derive the final contradiction.
\end{proof}

\section{Proof of Proposition~\ref{coro:3}}\label{sec:proofbgraph}
We shall prove a similar bound for a larger class of $\b$-graphs (see~\cite{FK21}). For $0<\b<1$, an $n$-vertex graph $G$ is an \emph{$\b$-graph} if every pair of disjoint vertex sets $A,B\subseteq V(G)$ of size $|A|,|B|> \b n$ are connected by an edge. Note that by the expander mixing lemma, Lemma~\ref{EML}, $(n,d,\lambda)$-graph with $\frac{d}{|\lambda|} \ge \frac{1}{\b ^2}$ is an $\b$-graph. Hence, Proposition~\ref{coro:3} follows from the following result.

\begin{prop}\label{main:3}
Let $G$ be an $n$-vertex $\b$-graph, then $ h(G) \ge  \binom{n}{\frac{1}{2}n} / \binom{(\frac{1}{2}+ 3 \b)n}{3\b n}$.  Specially, if $\b < 1/6$, then $h(G) \ge  2^{(\frac{1}{2}-3\b)n}$.
\end{prop}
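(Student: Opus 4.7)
The plan is a double-counting argument on pairs $(S,H)$, where $S \subseteq V(G)$ has $|S| = n/2$, $H$ is a Hamiltonian subset of $G$ with $|H| \le (1/2+3\beta)n$, and $S \subseteq H$. The key claim is that every $n/2$-subset $S$ is contained in at least one such $H$; granting this, counting pairs in two ways gives
\[
\binom{n}{n/2} \;\le\; \sum_{H}\binom{|H|}{n/2} \;\le\; h(G)\cdot\binom{(1/2+3\beta)n}{n/2} \;=\; h(G)\cdot\binom{(1/2+3\beta)n}{3\beta n},
\]
which rearranges to the first conclusion of the proposition. The ``specially'' corollary for $\beta<1/6$ then follows by combining $\binom{n}{n/2}\ge 2^n/(n+1)$ with the simple bound $\binom{(1/2+3\beta)n}{3\beta n}\le 2^{(1/2+3\beta)n}$, absorbing a polynomial factor in $n$.

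To prove the key claim, I first record that $\alpha(G)\le 2\beta n$: an independent set of size $>2\beta n$ would split into two disjoint subsets of size $>\beta n$ between which the $\beta$-graph property forces an edge, contradicting independence. Next, take a minimum path cover $P_1,\dots,P_k$ of $G[S]$. By the standard merging argument (if two distinct paths had adjacent endpoints, they could be merged, contradicting minimality), choosing one endpoint from each path yields an independent set in $G[S]$, so $k \le \alpha(G[S])\le 2\beta n$.

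The remaining task, and the main obstacle, is to stitch these $k\le 2\beta n$ paths into a single Hamiltonian cycle of $G[S\cup W]$ using at most $3\beta n$ vertices from $T := V(G)\setminus S$. After fixing a cyclic ordering of the paths, one needs to join the $k$ consecutive endpoint pairs by short connectors through $T$ using an average of at most $3/2$ internal vertices each. For a pair of endpoints $u, v$ that share a common $T$-neighbor, a length-$2$ connector $u\,t\,v$ suffices; when no common $T$-neighbor exists but both $|N(u)\cap T|,|N(v)\cap T|>\beta n$, the $\beta$-graph property applied to these two $T$-subsets yields an edge $t_1 t_2$ and hence a length-$3$ connector $u\,t_1\,t_2\,v$, using two $T$-vertices. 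The $\beta$-graph property also controls ``bad'' endpoints: for instance, at most $\beta n$ endpoints can have no $T$-neighbor at all, since otherwise they would form a set of size $>\beta n$ disjoint from $T$ with no edge to it. The delicate step is choosing the cyclic ordering and the connector type for each consecutive pair so that the total number of used $T$-vertices is at most $3\beta n$; balancing one- and two-vertex connectors across the $k$ pairs while accommodating endpoints with small $T$-neighborhoods is the main technical work, and the tight constant $3\beta n$ makes this balancing the crux of the proof.
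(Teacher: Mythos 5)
Your double-counting framework is fine, but the key claim it rests on is false, and the failure is structural rather than a fixable technicality. You need every $n/2$-set $S$ to \emph{extend} to a Hamiltonian subset $H\supseteq S$ with $|H|\le(\tfrac12+3\beta)n$. A $\beta$-graph carries no minimum degree condition: $K_{n-1}$ together with one isolated vertex $v$ is a $\beta$-graph whenever $\beta n\ge 1$ (any two disjoint sets of size $>\beta n$ each contain a clique vertex), yet no set $S$ containing $v$ lies in any Hamiltonian subset whatsoever, since $v$ remains isolated in $G[H]$ for every $H\supseteq S$. The same obstruction arises from any vertex of degree at most one. Independently of this, the part you flag as ``the main technical work'' is exactly where the argument would have to live even in the degenerate-free case: with $k$ as large as $2\beta n$ paths and no guarantee that consecutive endpoint pairs share a common $T$-neighbour, the worst case costs $2$ connector vertices per pair, i.e.\ $4\beta n>3\beta n$, and you also need the connectors to be pairwise disjoint, which your sketch does not address. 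Finally, your derivation of the ``specially'' clause via $\binom{n}{n/2}\ge 2^n/(n+1)$ leaves a spurious polynomial factor that the stated bound $2^{(\frac12-3\beta)n}$ has no room to absorb.

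The paper reverses the containment, and that reversal is the whole point: instead of extending $S$ by at most $3\beta n$ outside vertices, it finds a Hamiltonian subset \emph{inside} $S$ of size at least $(\tfrac12-3\beta)n$, so that bad vertices of $S$ are simply omitted from the cycle. Concretely, it shows that in a $\beta$-graph every $W\subseteq S$ with $\beta n\le|W|\le 2\beta n$ satisfies $|N_{G[S]}(W)|\ge(\tfrac12-3\beta)n$, and then invokes a theorem of Krivelevich to extract a cycle of that length in $G[S]$; the count then bounds the number of $n/2$-sets $S$ containing a fixed such $H$ by $\binom{n-|H|}{n/2-|H|}\le\binom{(\frac12+3\beta)n}{3\beta n}$, and the clean $2^{(\frac12-3\beta)n}$ bound follows by writing the ratio of binomials as a product of $(\tfrac12-3\beta)n$ factors, each at least $2$. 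If you want to salvage your plan, switch to $H\subseteq S$ and replace the path-stitching by a long-cycle lemma of this type.
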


We first show that large subgraphs of an $\b$-graph contain almost spanning cycles.

\begin{lem}\label{Cycle in beta graph}
    Let $G$ be an $\b$-graph and $0<c<1$. Then for every subset $S$ with $cn$ vertices, $G[S]$ contains a cycle of length at least $(c-3\b)n.$
\end{lem}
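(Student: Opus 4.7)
The plan is to perform a depth-first search (DFS) on $G[S]$ in the spirit of Proposition~\ref{find chain}, using the $\beta$-graph property both to force the DFS to produce a long path and then to close this path into a cycle.

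First, I will run DFS on $G[S]$ while tracking the stack $Q$ (always forming a path in $G[S]$), the unexplored set $U$ (initially $S$), and the explored set $X$. The DFS invariant is that there is no edge of $G[S]$ between $U$ and $X$; since $U,X\subseteq S$, this is the same as saying no edge of $G$ between them. By the $\beta$-graph property, if $|U|,|X|>\beta n$ at some DFS moment, there would be such an edge, contradicting the invariant. Hence $\min\{|U|,|X|\}\le \beta n$ throughout. Consider the first moment $M$ at which $|U|\le\beta n$. Just before $M$ we had $|U|>\beta n$, forcing $|X|\le\beta n$; the transition to $M$ is a push from $U$ onto the stack, which does not change $|X|$. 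So at $M$, $|X|\le\beta n$ still holds, and thus $|Q|\ge cn-2\beta n = (c-2\beta)n$; write $Q = q_1q_2\cdots q_\ell$.

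Next, I close $Q$ into a cycle by another application of the $\beta$-graph property, this time to the prefix $A=\{q_1,\ldots,q_{\lfloor\beta n\rfloor+1}\}$ and to $B=\{q_\ell\}\cup U$. Both sets have size strictly greater than $\beta n$, and they are disjoint in the nontrivial regime $c>3\beta$. The $\beta$-graph property yields an edge between $A$ and $B$. In the favourable case where this edge is $q_\ell q_i$ with $q_i\in A$, the cycle $q_iq_{i+1}\cdots q_\ell q_i$ in $G[S]$ has length at least $\ell - \lfloor\beta n\rfloor \ge (c-3\beta)n$, as desired.

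The main obstacle is the alternative case, where the $\beta$-graph edge takes the form $uq_i$ for some $u\in U$ and $q_i\in A$. My plan to handle this is to exploit the freedom in the DFS ordering: since $u$ is a $U$-neighbour of $q_i$, the DFS could have been run so as to pick $u$ as the extension from $q_i$ instead of $q_{i+1}$. Swapping $u$ into the stack at that moment produces a modified DFS history, and re-applying the closing argument to this modified history either directly lands us in the favourable case or yields a strictly improved configuration; since $G[S]$ is finite, iterating this procedure terminates and eventually produces a cycle of length at least $(c-3\beta)n$.
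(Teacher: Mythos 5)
There is a genuine gap, and it sits exactly where you flag the "main obstacle": the step that closes the DFS path into a cycle. Your first half is fine — the DFS invariant plus the $\beta$-graph property correctly forces $\min\{|U|,|X|\}\le\beta n$ throughout, and at the first moment $|U|\le\beta n$ you indeed get a path $Q$ on at least $(c-2\beta)n$ vertices. But the $\beta$-graph property applied to $A$ and $B=\{q_\ell\}\cup U$ only promises \emph{some} edge between the two sets, and nothing in the DFS invariant rules out edges between $U$ and the stack $Q$ (the invariant only forbids $U$--$X$ edges), so the "unfavourable" edge $uq_i$ with $u\in U$ is the generic outcome, not an exceptional one. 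Your proposed repair — rerun the DFS so that $u$ is pushed from $q_i$ instead of $q_{i+1}$, and claim that iterating "yields a strictly improved configuration" and terminates — is not a proof: changing one DFS choice alters the entire subsequent exploration, the new path and the new moment $M$ can be completely different, and you have identified no monotone quantity that improves from one iteration to the next, nor any reason the favourable case is ever reached. Note also that the obvious alternative fix (apply the $\beta$-graph property to a prefix and a suffix of $Q$ of size just over $\beta n$ each to get a chord) closes a cycle of length only $(c-4\beta)n$, short of the claimed $(c-3\beta)n$, so the constant itself requires care.

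The paper avoids this difficulty entirely: it does not run DFS but instead verifies the hypothesis of Lemma~\ref{lem:find long cycle} (Krivelevich's theorem, which encapsulates precisely the hard "long path versus long cycle" step). Concretely, the $\beta$-graph property gives $|N_G(W)|> n-|W|-\beta n$ for any $W$ with $|W|\ge\beta n$, hence $|N_{G[S]}(W)|\ge (c-3\beta)n$ for every $W\subseteq S$ with $\beta n\le |W|\le 2\beta n$, and Lemma~\ref{lem:find long cycle} with $k=2\beta n$ and $t=(c-3\beta)n$ immediately yields the cycle. If you want to keep a self-contained DFS argument, you would essentially have to reprove that lemma, including its cycle-closing mechanism; as written, your proposal does not do so.
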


To prove this lemma, we need the following result.
\begin{lem}[\cite{Krivelevich2019}, Theorem 1]\label{lem:find long cycle}
    Let $k>0,t\ge 2$ be integers. Let $G$ be a graph on more than $k$ vertices, satisfying that $|N_G(W)| \ge t$, for every $W \subseteq V(G)$ with $k/2 \le |W| \le k$. Then $G$ contains a cycle of length at least $t+1$.
\end{lem}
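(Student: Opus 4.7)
The plan is to apply Lemma~\ref{lem:find long cycle} directly to the induced subgraph $H := G[S]$, with carefully chosen integer parameters $k$ and $t$. We may assume $c > 3\b$ (else the conclusion is vacuous) and $n$ sufficiently large (else the target cycle length is at most $2$ and the statement is easy).

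The choice of parameters is dictated by matching two thresholds. We want cycles of length at least $(c-3\b)n$, so set $t := \lceil (c-3\b)n \rceil - 1$, giving $t+1 \geq (c-3\b)n$ and $t\geq 2$. We want any $W$ of size in $[k/2, k]$ inside $H$ to have $|W| > \b n$, so that the $\b$-graph hypothesis on $G$ is applicable; this forces $k/2 > \b n$. Together with the calculation below we take $k$ to be any integer with $2\b n < k \leq 2\b n + 1$. Note $|V(H)|=cn>k$ since $c>3\b>2\b$.

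The key verification step is to lower bound $|N_H(W)|$ for every $W \subseteq S$ with $k/2 \leq |W| \leq k$. Let $B := S \setminus (W \cup N_H(W))$. By definition of $N_H$, there are no $H$-edges, hence no $G$-edges, between $W$ and $B$ (both sit inside $S$). Since $|W| \geq k/2 > \b n$, the $\b$-graph property forces $|B| \leq \b n$. Therefore
\[
|N_H(W)| \;=\; |S| - |W| - |B| \;\geq\; cn - k - \b n \;\geq\; cn - (2\b n + 1) - \b n \;=\; (c-3\b)n - 1 \;\geq\; t.
\]
Lemma~\ref{lem:find long cycle} then produces a cycle of length at least $t+1 \geq (c-3\b)n$ in $H$, as desired.

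There is no substantial obstacle beyond bookkeeping: the conceptual move — translating the global $\b$-graph property into a local expansion statement for $H$ via the complementary set $B$ — is immediate, and the only care needed is the choice of $k$ just above $2\b n$ so that the lower endpoint $k/2$ of the window in Lemma~\ref{lem:find long cycle} still exceeds $\b n$, while $k$ itself does not eat into the budget for $|N_H(W)|$. Integer rounding of $k$ and $t$ is harmless for $n$ large.
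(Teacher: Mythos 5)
Your proposal does not prove the statement in question. The statement is Krivelevich's theorem itself: that a graph $G$ on more than $k$ vertices in which every $W$ with $k/2\le |W|\le k$ satisfies $|N_G(W)|\ge t$ must contain a cycle of length at least $t+1$. You instead \emph{invoke} this theorem as a black box (``apply Lemma~\ref{lem:find long cycle} directly to $H:=G[S]$'') in order to derive the long-cycle property of $\b$-graphs; that is, you have written a proof of Lemma~\ref{Cycle in beta graph}, not of Lemma~\ref{lem:find long cycle}. The actual content of the statement --- why a neighbourhood-expansion condition on mid-sized sets forces a long cycle --- is nowhere addressed, and it requires a genuine argument (Krivelevich's proof runs a DFS-type exploration and analyses the path formed by the stack, in the spirit of Section~2.4 of this paper; a P\'osa-rotation argument is another route). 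The paper itself does not reprove the lemma but imports it from~\cite{Krivelevich2019}, so as a proof of the stated lemma your text is circular.

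For what it is worth, the argument you did write is essentially the paper's own proof of Lemma~\ref{Cycle in beta graph}: there too one takes $W\subseteq S$ with $\b n\le |W|\le 2\b n$, uses the $\b$-graph property to get $|N_G(W)|> n-|W|-\b n$ (otherwise $W$ and $V\setminus(W\cup N_G(W))$ would be two large sets with no edge between them), subtracts the at most $(1-c)n$ neighbours outside $S$ to obtain $|N_{G[S]}(W)|\ge (c-3\b)n$, and then applies Lemma~\ref{lem:find long cycle}. Your handling of the integrality of $k$ and $t$ is fine and in fact slightly more careful than the paper's. But none of this discharges the obligation to prove the cited lemma.
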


\begin{proof}[Proof of Lemma~\ref{Cycle in beta graph}]
By definition of an $\b$-graph, for any vertex set $U$, if $\abs{U}\ge \b n$, then we have $\abs{N_G(U)}> n-|U| -\b n$, for otherwise $U$ and $V \backslash(U\cup N(U))$ would be two sets of size at least $\b n$ without an edge in between, a contradiction.
Let $W$ be any subset of $S$ of size $\b n  \le \abs{W} \le 2\b n$.
Then $\abs{N_{G[S]}(W)} \ge \abs{N_G(W)} - (n-\abs{S}) \ge n- 3 \b n-(1-c)n=(c-3\b)n$.
Now the result follows from Lemma~\ref{lem:find long cycle}.
\end{proof}


\begin{proof}[Proof of Proposition~\ref{main:3}]
    
By Lemma~\ref{Cycle in beta graph}, for any vertex set $S$ of size $\frac{1}{2}n$, we can find a cycle of length at least $(\frac{1}{2} -3\b )n$. 
For any such cycle $C_\ell$, with $(\frac{1}{2} -3\b )n \le \ell \le n/2$, it is contained in at most $\binom{n-\ell}{n/2-\ell} \le \binom{\left(\frac{1}{2}+ 3 \b\right)n}{3\b n} $ different subsets of size $n/2$. 
    So we can find at least $\binom{n}{\frac{1}{2}n} / \binom{\left(\frac{1}{2}+ 3 \b\right)n}{3\b n}$ Hamiltonian sets.

    If $\b < 1/6$, we have
    $$ \binom{n}{\frac{1}{2}n} / \binom{\left(\frac{1}{2}+ 3 \b\right)n}{3\b n} =\frac{n\cdot \left(n-1\right) \cdot \ldots  \cdot \left(n-\left(1/2 -3\b\right)n +1\right) }{\frac{1}{2} n \cdot \left(\frac{1}{2} n -1\right) \cdot \ldots \cdot \left(\frac{1}{2}n-\left(1/2 -3\b\right)n +1\right)}\ge 2^{\left(\frac{1}{2}-3\b\right)n}.$$
\end{proof}

\section{Concluding remarks}
\label{sec:conc}
In this paper, we proved a near optimal lower bound on the number of Hamiltonian subsets in a graph with given minimum degree, which asymptotically gives much better bounds for large graphs. Kim, Liu, Sharifzadeh and Staden~\cite[Thr.~1.3]{KLSS17} proved that for $d$ sufficiently large, any graph $G$ different from $K_{d+1}$ with minimum degree $\delta(G) \ge d$ has at least roughly twice as many Hamiltonian subsets as $K_{d+1}$. The following extension of Koml\'{o}s conjecture seems plausible.

\begin{prob}
    Let $d\ge 3$ be an odd integer.
    Let $G$ be a graph different from $K_{d+1}$ with minimum degree $\delta(G) \ge d$.
    Is $h(G) \ge 2 h(K_{d+1})$?
\end{prob}

Equality occurs if $G\in \{ 2K_{d+1}, K_{d+1} \star K_{d+1}, K_{d+2} \backslash M\},$ where $M$ is a perfect matching of $K_{d+2},$ or when $G=K_{3,3}$ and $d=3$.
Here $K_{d+1} \star K_{d+1}$ is the union of two $K_{d+1}$s which are vertex-disjoint except from one common vertex. Notice that the same is not true for even $d$, as then 
$$h(K_{d+2} \backslash M) = 2^{d+2}-d^2-\frac72 d -4 <2 h(K_{d+1})=2^{d+2}-d^2-3d -4.$$


\bibliographystyle{abbrv}
\bibliography{crux}

\begin{thebibliography}{10}

\bibitem{AC88}
N.~Alon and F.~R.~K. Chung.
\newblock Explicit construction of linear sized tolerant networks.
\newblock In {\em Proceedings of the {F}irst {J}apan {C}onference on {G}raph
  {T}heory and {A}pplications ({H}akone, 1986)}, volume~72, pages 15--19, 1988.

\bibitem{csaba2016proof}
B.~Csaba, D.~K{\"u}hn, A.~Lo, D.~Osthus, and A.~Treglown.
\newblock {\em Proof of the 1-factorization and Hamilton decomposition
  conjectures}, volume 244.
\newblock American Mathematical Society, 2016.

\bibitem{dirac1952some}
G.~A. Dirac.
\newblock Some theorems on abstract graphs.
\newblock {\em Proceedings of the London Mathematical Society}, 3(1):69--81,
  1952.

\bibitem{FKKl22}
I.~G. Fern\'{a}ndez, J.~Kim, Y.~Kim, and H.~Liu.
\newblock Nested cycles with no geometric crossings.
\newblock {\em Proc. Amer. Math. Soc. Ser. B}, 9:22--32, 2022.

\bibitem{FL22}
I.~G. Fern{\'a}ndez and H.~Liu.
\newblock How to build a pillar: a proof of {T}homassen's conjecture.
\newblock {\em arXiv preprint}, arXiv: 2201.07777.

\bibitem{FK21}
L.~Friedman and M.~Krivelevich.
\newblock Cycle lengths in expanding graphs.
\newblock {\em Combinatorica}, 41(1):53--74, 2021.

\bibitem{Fer22}
I.~{Gil Fern{\'a}ndez}, J.~{Hyde}, H.~{Liu}, O.~{Pikhurko}, and Z.~{Wu}.
\newblock {Disjoint isomorphic balanced clique subdivisions}.
\newblock {\em arXiv preprint}, arXiv: 2204.12465.

\bibitem{HHKLLW22}
J.~Haslegrave, J.~Hu, J.~Kim, H.~Liu, B.~Luan, and G.~Wang.
\newblock Crux and long cycles in graphs.
\newblock {\em SIAM J. Discrete Math.}, 36(4):2942--2958, 2022.

\bibitem{HHKL22}
J.~{Haslegrave}, J.~{Hyde}, J.~{Kim}, and H.~{Liu}.
\newblock {Ramsey numbers of cycles versus general graphs.}
\newblock {\em Forum of Mathematics, Sigma}, to appear. arXiv: 2112.03893.

\bibitem{HKL22}
J.~Haslegrave, J.~Kim, and H.~Liu.
\newblock Extremal density for sparse minors and subdivisions.
\newblock {\em Int. Math. Res. Not. IMRN}, (20):15505--15548, 2022.

\bibitem{Im22}
S.~{Im}, J.~{Kim}, Y.~{Kim}, and H.~{Liu}.
\newblock {Crux, space constraints and subdivisions}.
\newblock {\em arXiv preprint}, arXiv: 2207.06653.

\bibitem{karp1972reducibility}
R.~M. Karp.
\newblock Reducibility among combinatorial problems.
\newblock In {\em Complexity of computer computations}, pages 85--103.
  Springer, 1972.

\bibitem{KLSS17}
J.~Kim, H.~Liu, M.~Sharifzadeh, and K.~Staden.
\newblock Proof of {K}oml\'{o}s's conjecture on {H}amiltonian subsets.
\newblock {\em Proc. Lond. Math. Soc. (3)}, 115(5):974--1013, 2017.

\bibitem{knox2015edge}
F.~Knox, D.~K{\"u}hn, and D.~Osthus.
\newblock Edge-disjoint hamilton cycles in random graphs.
\newblock {\em Random Structures \& Algorithms}, 46(3):397--445, 2015.

\bibitem{KS94}
J.~Koml\'{o}s and E.~Szemer\'{e}di.
\newblock Topological cliques in graphs.
\newblock {\em Combin. Probab. Comput.}, 3(2):247--256, 1994.

\bibitem{KS96}
J.~Koml\'{o}s and E.~Szemer\'{e}di.
\newblock Topological cliques in graphs. {II}.
\newblock {\em Combin. Probab. Comput.}, 5(1):79--90, 1996.

\bibitem{Krivelevich2019}
M.~Krivelevich.
\newblock Long cycles in locally expanding graphs, with applications.
\newblock {\em Combinatorica}, 39(1):135--151, 2019.

\bibitem{kuhn2013hamilton}
D.~K\"{u}hn and D.~Osthus.
\newblock Hamilton decompositions of regular expanders: a proof of {K}elly's
  conjecture for large tournaments.
\newblock {\em Adv. Math.}, 237:62--146, 2013.

\bibitem{kuhn2014hamiltonsurvey}
D.~K\"{u}hn and D.~Osthus.
\newblock Hamilton cycles in graphs and hypergraphs: an extremal perspective.
\newblock In {\em Proceedings of the {I}nternational {C}ongress of
  {M}athematicians---{S}eoul 2014. {V}ol. {IV}}, pages 381--406. Kyung Moon Sa,
  Seoul, 2014.

\bibitem{kuhn2014hamilton}
D.~K{\"u}hn and D.~Osthus.
\newblock Hamilton decompositions of regular expanders: applications.
\newblock {\em Journal of Combinatorial Theory, Series B}, 104:1--27, 2014.

\bibitem{Kuhn2005}
D.~K\"{u}hn, D.~Osthus, and A.~Taraz.
\newblock Large planar subgraphs in dense graphs.
\newblock {\em J. Combin. Theory Ser. B}, 95(2):263--282, 2005.

\bibitem{LM17}
H.~Liu and R.~Montgomery.
\newblock A proof of {M}ader's conjecture on large clique subdivisions in
  {$C_4$}-free graphs.
\newblock {\em J. Lond. Math. Soc. (2)}, 95(1):203--222, 2017.

\bibitem{LM22}
H.~Liu and R.~Montgomery.
\newblock A solution to {E}rd{\H{o}}s and {H}ajnal's odd cycle problem.
\newblock {\em Journal of the American Mathematical Society}, to appear. DOI:
  https://doi.org/10.1090/jams/1018.

\bibitem{LWY22}
H.~Liu, G.~Wang, and D.~Yang.
\newblock Clique immersion in graphs without a fixed bipartite graph.
\newblock {\em J. Combin. Theory Ser. B}, 157:346--365, 2022.

\end{thebibliography}

\end{document}